\DeclareMathAlphabet{\mathcalligra}{T1}{calligra}{m}{n}
\DeclareFontShape{T1}{calligra}{m}{n}{<->s*[1.5]callig15}{}
\newtheorem{theorem}{Theorem}[section]
\newtheorem{lemma}[theorem]{Lemma}
\newtheorem{lem-def}[theorem]{Lemma-definition}
\newtheorem{proposition}[theorem]{Proposition}
\newtheorem{prop-def}[theorem]{Proposition-definition}
\newtheorem{conjecture}[theorem]{Conjecture}
\theoremstyle{definition}
\newtheorem{remark}[theorem]{Remark}
\numberwithin{equation}{section}
\newtheorem{thm}{Theorem}[section] % the main one
\theoremstyle{plain} % just in case the style had changed
\newcommand{\thistheoremname}{}
\newtheorem{genericthm}[thm]{\thistheoremname}
\newtheorem*{genericthm*}{\thistheoremname}
\newenvironment{namedthm*}[1]
  {\renewcommand{\thistheoremname}{#1}%
   \begin{genericthm*}}
  {\end{genericthm*}}
\newcommand{\CC} {\mathbb{C}}
\newcommand{\LL} {\mathbb{L}}
\newcommand{\PP} {\mathbb{P}}
\newcommand{\RR} {\mathbb{R}}
\newcommand {\shA} {\mathcal{A}}
\newcommand {\shB} {\mathcal{B}}
\newcommand {\shD} {\mathcal{D}}
\newcommand {\shE} {\mathcal{E}}
\newcommand {\shG} {\mathcal{G}}
\newcommand {\shM} {\mathcal{M}}
\newcommand {\shN} {\mathcal{N}}
\newcommand {\shO} {\mathcal{O}}
\newcommand {\shQ} {\mathcal{Q}}
\newcommand {\shR} {\mathcal{R}}
\newcommand {\shS} {\mathcal{S}}
\newcommand {\shP} {\mathcal{P}}
\newcommand {\shU} {\mathcal{U}}
\newcommand {\shV} {\mathcal{V}}
\newcommand {\shW} {\mathcal{W}}
\newcommand {\shX} {\mathcal{X}}
\newcommand {\shY} {\mathcal{Y}}
\newcommand{\sExt}{\mathscr{E} \kern -3pt xt}
\newcommand {\Hom} {\operatorname{Hom}}
\newcommand {\sHom}{\mathscr{H}\kern-5pt\mathcalligra{om}}
\newcommand {\Pic} {\operatorname{Pic}}
\newcommand {\Tot} {\operatorname{Tot}}
\newcommand {\arw} {\longrightarrow}
\newcommand{\spinor}{\mathbb S}
\DeclareRobustCommand{\Sec}{\ifmmode\mathsection\else\textsection\fi}
\newcommand\longleftrightarrowfill@{%
  \arrowfill@\leftarrow\relbar\rightarrow}
\title[]{Derived Equivalences of Generalized
Grassmannian Flops: $D_4$ and $G_2^{\dagger}$
Cases}
\author[Y. Xie]{Ying Xie}
\address{Southern University of Science and Technology \\ 1088 Xueyuan Avenue
 \\ Shenzhen 518055, China.}
\email{xiey@sustech.edu.cn}
\pgfplotsset{compat=1.18}
\begin{document}

\begin{abstract}
    We prove that the generalized Grassmannian flops of both $D_4$ and $G_2^{\dagger}$ type induce derived equivalences, which provide new evidence for the DK conjecture by Bondal-Orlov and Kawamta. The proof is based on Kuznetsov's mutation technique, which takes a sequence of mutations of exceptional objects.   
\end{abstract}
\maketitle

\section{Introduction}
\subsection{Background} Flops represent a class of birational transformations that play a crucial role in the minimal model program (MMP), which occurs when two birational varieties have isomorphic canonical bundles. Derived categories of coherent sheaves encode rich geometric information and allow for subtle comparisons between birational models, which has led to the formulation of DK conjectures about the behavior of derived categories under flops by Bondal-Orlov and Kawamata independently:
\begin{conjecture}[DK conjecture \cite{bondal2002derived, kawamata2002d}]\label{conj:DK_conjecture}
    Consider a birational map $\mu:X_1\arw X_2$ of smooth varieites, which is resolved by morphisms $f_i:X_0\arw X_i$ such that $f_1^*K_{X_1}$ and $f_2^*K_{X_2}$ are linearly equivalent (i.e. a K-equivalence). Then $X_1$ and $X_2$ are derived equivalent.
\end{conjecture}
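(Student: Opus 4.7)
The plan is to attempt a Fourier-Mukai construction using the common resolution, reduce to local models via a factorization of the birational map, and glue local equivalences into a global one. As a first step, I would take $\Phi := R(f_2)_{*} L f_1^{*} : \dbcoh(X_1) \to \dbcoh(X_2)$ as the candidate functor, with Fourier-Mukai kernel the structure sheaf $\Oc_{X_0}$ viewed under the closed immersion $(f_1, f_2) : X_0 \hookrightarrow X_1 \times X_2$. By the Bondal-Orlov-Bridgeland criterion, verifying that $\Phi$ is an equivalence reduces to showing that $\Phi$ sends a spanning class of point sheaves $k(x)$ for $x \in X_1$ to simple, pairwise $\Ext$-orthogonal objects in $\dbcoh(X_2)$, together with an adjunction computation. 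Away from the exceptional loci of $f_1, f_2$ this is automatic from the fact that $\mu$ is an isomorphism in codimension one, and the K-equivalence hypothesis $f_1^{*}K_{X_1} \sim f_2^{*}K_{X_2}$ is exactly what one needs for the Grothendieck duality calculation to produce mutually inverse adjoints. The entire content therefore sits over the flopping locus.

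Second, to attack the flopping locus I would reduce to elementary building blocks via weak factorization (Abramovich-Karu-Matsuki-W\l odarczyk): $\mu$ factors through a chain of smooth blowups and blowdowns. A generic smooth blowup destroys the K-equivalence property, so one must choose the factorization carefully, arranging for each intermediate birational map to remain a K-equivalence; the composition would then realize $\mu$ as an iterate of elementary K-equivalences, each analytically or formally modeled on a concrete local flop (standard, Mukai, stratified Mukai, Abuaf-Ueda, generalized Grassmannian, etc.).

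Third, for each local model I would construct the equivalence by whichever technique is best suited: variation of GIT combined with Halpern-Leistner window subcategories when the flop admits a quotient presentation; Kuznetsov-style mutations of exceptional collections when the exceptional loci carry homogeneous structure (the approach pursued in the body of this paper for the $D_4$ and $G_2^{\dagger}$ cases); or explicit tilting bundles via noncommutative resolutions of the contraction. The local kernels would then be glued to a global Fourier-Mukai kernel agreeing with $\Oc_{X_0}$ on overlaps, yielding the equivalence $\Phi$.

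The main obstacle, and the reason Conjecture \ref{conj:DK_conjecture} remains open in full generality, is twofold. First, there is no classification of elementary K-equivalences in dimension $\geq 4$, so the reduction to known local models is a program rather than a theorem; even the precise analytic type of the exceptional fibers can vary in uncontrolled ways. Second, even granting such a reduction, the gluing step is subtle: local Fourier-Mukai kernels are defined only up to noncanonical isomorphism, and the obstructions to assembling them into a global kernel live in higher $\Ext$ groups that are not controlled uniformly. Realistically, any concrete attempt on the conjecture proceeds by enlarging the list of local models for which an equivalence is known, which motivates the Grassmannian flop cases treated in the remainder of this paper.
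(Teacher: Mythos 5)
The statement you were asked to prove is labeled a \emph{conjecture} in the paper, and the paper does not claim to prove it in general; it only establishes two new instances (the $D_4$ and $G_2^{\dagger}$ generalized Grassmannian flops, Theorem~\ref{thm}) via explicit mutation of semiorthogonal decompositions on the common resolution. Your proposal, correctly, is not a proof either: the last paragraph concedes that both the reduction to local models and the gluing of local kernels are unresolved, so what you have written is a program, not an argument. That is an accurate assessment of the state of the art, but it should not be presented as a proof attempt.

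Within that program there are also some specific points worth flagging. First, the naive Fourier--Mukai functor $\Phi = R f_{2*} L f_1^{*}$ with kernel $\mathcal{O}_{X_0}$ is already known \emph{not} to be an equivalence in general K-equivalent situations (e.g.\ Mukai flops, where Namikawa showed this functor fails and a correction of the kernel is required); so the Bondal--Orlov--Bridgeland criterion would genuinely fail at points of the flopping locus for this particular choice, not merely be ``automatic away from the exceptional locus.'' Second, weak factorization produces a chain of blowups and blowdowns through smooth centers, but as you note these intermediate steps break K-equivalence, and there is no known way to force the factorization to pass through K-equivalences in dimension $\geq 4$; this is not a matter of ``choosing carefully'' but an open structural problem. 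Third, the gluing of local Fourier--Mukai kernels is obstructed even in well-understood settings and has no general mechanism. For the purposes of this paper, the correct reading of the conjecture statement is as motivating context for Theorem~\ref{thm}, whose actual proof (Sections~\ref{pfD4} and~\ref{pfg2}) proceeds entirely by Kuznetsov-style mutations of the two Orlov blow-up decompositions of $D(X)$, comparing ${}^{\perp}D(X_+)$ to ${}^{\perp}D(X_-)$ cell by cell -- a strategy you list in passing as one of several options for local models.
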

DK conjecture holds for typical examples of K-equivalences such as standard flops and Mukai flops, as well as some sporadic flops like $G_2$ flops (\cite{kuznetsov2018derived}, \cite{ueda2019g_2}) and $C_2$ flops(\cite{segal2016new}, \cite{morimura2021derived}, \cite{hara2022derived}).  Many more examples can be constructed from the geometry of homogeneous varieties, the so-called \emph{generalized Grassmann flops} (\cite{kanemitsu2022mukai} \cite{cflip}). In this paper, we provide two new classes of generalized Grassmann flops ($D_4$ and $G_2^{\dagger}$) which satisfy Conjecture \ref{conj:DK_conjecture}.

\subsection{Main Results} The geometric construction of the $D_4$ flop and $G_2^{\dagger}$ flop are due to the notion of roof introduced by Kanemitsu \cite{kanemitsu2022mukai}. Fix $V_8=\CC^8$, and consider the orthogonal Grassmannian $OGr(3, V_8)$ whose elements consist of isotropic 3-dimensional subspaces of $V_8$ with respect to the standard quadratic form of $\CC^8$, and $OGr(3, V_8)$ admits two $\PP^3$ fibrations onto the two connected components $OGr(4, V_8))_{\pm}$ of $OGr(4, V_8)$, which is the $D_4$ roof in \cite{kanemitsu2022mukai}. Then the generalized Grassmannian $D_4$-flop is given by a flop $X_+$ to $X_-$ resolved by two blow-ups such that they share the same exceptional divisor isomorphic to $OGr(3, V_8)$. See Section \ref{D4} for more details.  

Fix a smooth 5-dimensional quadric hypersurface $Q$ and any rank 3 Ottaviani bundle $\shG$ on it. The projectiviazation $\PP(\shG)$ admits another $\PP^2$ bundle on another smooth 5-dimensional quadric $Q'$, which is the $G_2^{\dagger}$ roof in Section 5 of \cite{kanemitsu2022mukai}. Then the generalized Grassmannian $G_2^{\dagger}$ flop is given by a flop $Y$ to $Y'$ resolved by two blow-ups such that they share the same exceptional divisor isomorphic to $\PP(\shG)$. See Section \ref{G2} for details. 

\begin{theorem}\label{thm}
    Both the two flops $X_+\dashrightarrow X_-$ and $Y\dashrightarrow Y'$ above satisfy the DK conjecture. That means there are equivalences
    $$D(X_+)\cong D(X_-),\,\,\,D(Y)\cong D(Y').$$
\end{theorem}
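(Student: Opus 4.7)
The plan is to apply Kuznetsov's mutation technique to the common resolution, in the spirit of the proofs for the $G_2$ flop in \cite{kuznetsov2018derived, ueda2019g_2}. I describe the argument uniformly for both cases, denoting the two sides of the flop by $X_+$ and $X_-$ (so in the second case $X_+ = Y$ and $X_- = Y'$). Let $\blX$ denote the common blow-up. By construction $\blX$ is simultaneously the blow-up of $X_\pm$ along a smooth center $Z_\pm$ of codimension $c_\pm$, and the two exceptional divisors coincide with a single $j\colon E \hookrightarrow \blX$. In the $D_4$ case $E = OGr(3, V_8)$, $Z_\pm \cong OGr(4, V_8)_\pm \cong Q^6$, and $c_\pm = 4$; in the $G_2^{\dagger}$ case $E = \PP(\shG)$, $Z_\pm$ are the two five-dimensional quadrics $Q$ and $Q'$, and $c_\pm = 3$. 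In each case $E$ carries two projective bundle structures $p_\pm\colon E \to Z_\pm$ of relative dimension $c_\pm - 1$.

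Applying Orlov's blow-up formula to each resolution map $\pi_\pm\colon \blX \to X_\pm$ yields two semiorthogonal decompositions
\begin{equation*}
D(\blX) \;=\; \bigl\langle \pi_\pm^{*} D(X_\pm),\ \Phi_\pm^{1} D(Z_\pm),\ \Phi_\pm^{2} D(Z_\pm),\ \ldots,\ \Phi_\pm^{c_\pm - 1} D(Z_\pm) \bigr\rangle,
\end{equation*}
where each $\Phi_\pm^{k}$ is the fully faithful functor $F \mapsto j_{*}(p_\pm^{*} F \otimes \shO_E(k))$. Since $Z_\pm$ is a smooth quadric, I would then refine each auxiliary block $\Phi_\pm^{k} D(Z_\pm)$ via Kapranov's full exceptional collection, obtaining two explicit semiorthogonal decompositions of $D(\blX)$ consisting of $\pi_\pm^{*} D(X_\pm)$ together with exceptional objects supported on $E$. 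The heart of the argument is then a sequence of left and right mutations that transforms the decomposition coming from $\pi_+$ into the one coming from $\pi_-$: after the mutations the auxiliary exceptional blocks cancel and $\pi_+^{*} D(X_+)$ is identified with $\pi_-^{*} D(X_-)$ inside $D(\blX)$, yielding the Fourier-Mukai equivalence $D(X_+) \cong D(X_-)$.

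The two key inputs that make the mutation sequence close up differ between the cases. For $D_4$, triality identifies the two spinor quadrics $Z_\pm$ and relates the two spinor bundles on $E = OGr(3, V_8)$ arising from the pullback of the relative tautological line bundles along $p_\pm$; this furnishes the Ext-vanishings required to mutate through each Kapranov block. For $G_2^{\dagger}$, the analogous role is played by Ottaviani's spinorial description of $\shG$: its resolution controls how $\shO_E(1)$ restricts along each $p_\pm$, and the required vanishings then follow from Borel-Weil-Bott on $Q^5$ combined with this resolution.

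The main obstacle I anticipate is the combinatorial bookkeeping of the mutation sequence. A priori there are many exceptional blocks to mutate through, and at each step one must identify the new block explicitly and verify the next Ext-vanishing. Each individual vanishing reduces to cohomology of a line or spinor bundle on $E$ or on the quadrics $Z_\pm$ and is not difficult in isolation, but organizing them into a valid sequence that terminates in the clean identification $\pi_+^{*} D(X_+) \simeq \pi_-^{*} D(X_-)$ is where the bulk of the work will lie. I expect the $G_2^{\dagger}$ case to be the more delicate of the two, since it lacks the symmetry afforded by triality in the $D_4$ case and instead requires explicit manipulations of the Ottaviani bundle and its spinor resolution.
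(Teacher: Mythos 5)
Your framework --- a common resolution, Orlov's blow-up formula on both sides, refinement of the quadric blocks by Kapranov's exceptional collections, and a sequence of mutations matching the two decompositions --- is exactly the one the paper uses, and your identification of the geometric data ($E$, the centers $Z_\pm$, the codimensions, triality for $D_4$, the Ottaviani bundle for $G_2^{\dagger}$) is accurate. However, what you have written is a strategy rather than a proof: the entire mathematical content of the argument is the explicit mutation sequence, which you defer as combinatorial bookkeeping. A priori there is no reason the two semiorthogonal decompositions should be related by mutations at all; that they are is the theorem, and exhibiting a sequence that closes up is not a routine verification of Ext-vanishings.

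Concretely, the idea missing from your proposal is the mechanism by which the two sides meet in the middle. In the $D_4$ case the paper regroups each side, after exchanges justified by the vanishings of Lemma \ref{van} and a rotation using the Serre functor $S_X\cong -\otimes\shO(3E)[\dim X]$, into six twisted copies of a single block $\shA=\langle \shO_E,\shV^{\vee},\shO_E(1,-1),\shO_E(-1,1)\rangle$, where $\shV$ is the tautological rank-$3$ bundle on $E=OGr(3,V_8)$. The key point is that the two spinor bundles $\shU_+^{\vee}$ and $\shU_-^{\vee}$, which generate different-looking blocks on the two sides, both mutate to the \emph{same} bundle $\shV^{\vee}$ through $\shO_E(1,-1)$, resp.\ $\shO_E(-1,1)$, via the relative Euler sequences of $p_\pm$ (Lemma \ref{van}(6)); this is what forces $\shA=\shB$ and hence identifies the complements $\pi_+^*D(X_+)$ and $\pi_-^*D(X_-)$. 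The analogous statement in the $G_2^{\dagger}$ case is that $\shS$ and $\shS'$ both mutate to the rank-$3$ bundle $\shE$ on $R$ via the sequences (\ref{seqfund1})--(\ref{seqfund2}). Without isolating this common block, the mutation sequence has no reason to terminate in the clean identification you assert, so the proposal as written does not yet establish the equivalence.
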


Theorem \ref{thm} also holds for relative flops over a smooth projective base $B$(Section \ref{rel} for details). Let $\shW$ be a \emph{quadratic vector bundle} of rank eight on $B$. That means there is a qudratic form $q\in H^0(B, Sym^2 \shW^{\vee})$ on $\shW$ such that the morphism $q: \shW\longrightarrow \shW^{\vee}$ is isomorphic. We also assume that the structure group of $\shW$ lies in $spin(8)$. Then the relative orthogonal Grassmannian $OGr(3, \shW)\longrightarrow B$ is a locally trivial fibration with general fiber isomorphic to $OGr(3, V_8)$, which admits a relative $D_4$ roof structure over the base $B$. Then, the same construction shows that there is a flop $\shX_+\dashrightarrow \shX_-$. Similarly, given a relative $G_2^{\dagger}$ roof $\shR\longrightarrow B$, there is a flop $\shY\dashrightarrow \shY'$. Then, we have the following for relative cases as in Theorem \ref{thm}.
\begin{theorem}[relative cases]\label{relthm}
Both the two flops $\shX_+\dashrightarrow \shX_-$ and $\shY\dashrightarrow \shY'$ satisfy the DK conjecture. Namely, there are equivalences 
$$D(\shX_+)\cong D(\shX_-), \,\,\shD(\shY)\cong D(\shY').$$
\end{theorem}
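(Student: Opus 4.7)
The plan is to run the argument of Theorem \ref{thm} $D(B)$-linearly throughout, replacing each exceptional object used in the absolute case by the image of a $D(B)$-linear fully faithful functor whose source is $D(B)$. The hypothesis that the structure group of $\shW$ lies in $\mathrm{Spin}(8)$ (together with the built-in $G_2$-equivariance of the Ottaviani bundle $\shG$) is exactly what guarantees that all tautological bundles on the roof, together with its two projective bundle structures, globalize over $B$; it thereby turns each ingredient of the absolute construction into its relative counterpart.

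First I would set up the relative diagram: both $\shX_+\dashrightarrow\shX_-$ and $\shY\dashrightarrow\shY'$ are simultaneously resolved by pairs of blow-ups whose shared exceptional divisor is the relative roof $OGr(3,\shW)\to B$, resp.\ $\PP(\shG)\to B$. Applying the relative form of Orlov's blow-up formula, the derived category of the blown-up variety decomposes as $D(\shX_\pm)$ (resp.\ $D(\shY)$, $D(\shY')$) together with several copies of the derived category of the roof, twisted by powers of the exceptional-divisor line bundle. Each of these copies is then further refined using the two relative projective bundle structures on the roof and the $\mathrm{Spin}(8)$- (resp.\ $G_2$-) equivariant full exceptional collections on the fibers that appeared in the absolute argument. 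The outcome is a $D(B)$-linear semiorthogonal decomposition whose blocks are either equivalent to $D(B)$ or to the derived category of the other side of the flop.

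Next I would replay the mutation sequence of Theorem \ref{thm} one step at a time. Each atomic operation in that sequence --- mutation through a $D(B)$-linear admissible subcategory, twist by a line bundle pulled back from $B$, and the roof-symmetry functor identifying the two projective bundle structures on the common exceptional divisor --- is manifestly compatible with pullback from $B$, and so preserves $D(B)$-linearity together with the identities among generators that were established in the absolute setting. Concatenating the mutations yields the claimed equivalences $D(\shX_+)\cong D(\shX_-)$ and $D(\shY)\cong D(\shY')$. The place where real work is required is the promotion of the various $\Ext$-vanishings that justify each mutation to the vanishing of $R\pi_*\sHom(-,-)$, where $\pi$ denotes the appropriate structural projection to $B$; for objects built from relative homogeneous bundles this follows from flat base change together with cohomology-and-base-change, reducing to the fiberwise vanishings that are exactly the content of Theorem \ref{thm}. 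This upgrade is the main technical obstacle, but once it is in place the relative mutation sequence produces Theorem \ref{relthm} directly.
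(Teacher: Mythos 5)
Your proposal is correct and follows essentially the same route as the paper: a relative Orlov blow-up decomposition with blocks of the form $s^*D(B)\otimes E_i$, the mutation sequence of the absolute case replayed verbatim, and the key technical step being the upgrade of fiberwise $\Ext$-vanishings to relative vanishings via flat base change (this is exactly the content of the paper's Lemma \ref{relvan}). The paper additionally notes that the Serre functor only differs by a twist by $\omega_B$, which is absorbed by the $D(B)$-linearity of the blocks — a point your "twist by a line bundle pulled back from $B$" remark implicitly covers.
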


\subsection{Other related work} Both arguments for proving derived equivalences follow from explicit sequences of mutations of exceptional objects, which should be able to apply to other generalized Grassmannian flop cases. So far, the $C_2$, $G_2$, and $A_4^G$, Mukai flop cases have been settled by this method. Recently, the author and Marco Rampazzo carried out $D_5$ cases \cite{rampazzo2024derived}. However, a general mutation pattern for the $D_n$ flop is still missing. Almost the same time, Wahei Hara solved the $G_2^{\dagger}$ case using the tilting method \cite{hara2024derived}.

\subsection{K3 fibrations}  Consider the restriction of the relative $D_4$ roof $OGr(4, \shW)$ to its any generic $(1,1)$ section $M_{D_4}$, and the restriction of the two fibrations onto $OG(3, \shW)_{\pm}$ are generically $\PP^2$ fibrations and locally trivial $\PP^3$ fibrations over some proper subvarieties $\shM_{\pm}\subset OG(4, \shW)_{\pm}$. Similarly, there is a pair of proper subvarieties $\shN\subset \shQ$ and $\shN'\subset \shQ'$ by considering a generic $(1,1)$ section of the relative $G_2^{\dagger}$ roof $\shR\longrightarrow B$. Here $\shQ$ and $\shQ'$ are locally trivial 5-dimensional quadric fibration over $B$. Then we have the following.
\begin{theorem}\label{K3}
The above subvarieties $\shM_{\pm}$ and $\shN, \shN'$ are both K3 fibrations over $B$ such that the general fiber of each is smooth K3 surface of degree 12. Moreover, there are derived equivalences
$$ D(\shM_+)\cong D(\shM_-),\,\, D(\shN)\cong D(\shN').$$
\end{theorem}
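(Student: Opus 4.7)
The plan is to proceed in three stages.

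First, I would give explicit geometric descriptions of the loci. For the $D_4$ case, $OGr(3,\shW)$ is naturally a projective bundle $\PP(\shU_{\pm})$ over each $OG(4,\shW)_{\pm}$, where $\shU_{\pm}$ is the relative tautological isotropic rank-$4$ subbundle. A relative $(1,1)$ section of the roof corresponds, via pushforward along $\pi_{\pm}$, to a section of a rank-$4$ vector bundle $\shE_{\pm}$ on $OG(4,\shW)_{\pm}$ built from $\shU_{\pm}^{\vee}$ twisted by appropriate line bundles, and $\shM_{\pm}$ is the zero scheme of this induced section (equivalently, the locus where the whole $\PP^3$-fibre lies inside $M_{D_4}$). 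The $G_2^{\dagger}$ case is parallel, with $\shN,\shN'$ cut out as zero loci of rank-$3$ sections built from the Ottaviani bundle $\shG$ on the two quadric fibrations $\shQ,\shQ'$.

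Second, I would verify that these are K3 fibrations of degree $12$. Since $\shM_{\pm}$ is cut out in the $6$-dimensional fibre $\QQ^6$ by a section of a rank-$4$ bundle (and $\shN,\shN'$ in the $5$-dimensional fibre $\QQ^5$ by a section of a rank-$3$ bundle), the expected relative dimension is $2$, and Bertini supplies smoothness and irreducibility of the generic fibre. Writing $OG_{\pm}:=OG(4,\shW)_{\pm}$, adjunction on the Koszul resolution gives $K_{\shM_{\pm}/B}=(K_{OG_{\pm}/B}+c_1(\shE_{\pm}))|_{\shM_{\pm}}$, and a standard Chern class computation on the spinor variety shows this vanishes on each fibre. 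Together with simple-connectedness this identifies the generic fibre as a K3 surface; the top Chern class $c_4(\shE_{\pm})$ intersected with the natural polarisation on $OG_{\pm}$ yields degree $12$. The analogous computation with the Ottaviani bundle on $\QQ^5$ settles the $G_2^{\dagger}$ case.

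Third, I would establish the derived equivalences by adapting the mutation technique of Theorem~\ref{relthm} to the sub-roof $M_{D_4}$ and to the $(1,1)$ section of the $G_2^{\dagger}$ roof. The two projections $M_{D_4}\to OG_{\pm}$ are $\PP^2$-fibrations outside $\shM_{\pm}$ and jump to $\PP^3$-fibrations over $\shM_{\pm}$, exhibiting $M_{D_4}$ as a common partial resolution of a birational correspondence linking $\shM_+$ and $\shM_-$. Applying Orlov's projective bundle and blow-up formulas from each side produces two semi-orthogonal decompositions of $D(M_{D_4})$, each containing a block $D(\shM_{\pm})$ together with exceptional pieces from the $\QQ^6$-fibration; a mutation sequence parallel to that of Theorem~\ref{relthm} then matches the two decompositions and yields $D(\shM_+)\cong D(\shM_-)$. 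The same scheme, with the Ottaviani bundle and the $G_2^{\dagger}$ mutation pattern in place, handles $D(\shN)\cong D(\shN')$.

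The hard part will be the mutation step for the K3 subcategory: unlike in the ambient flop, where every piece of the decomposition is generated by an exceptional object, the block $D(\shM_{\pm})$ contains no exceptional objects and must be moved as a single unit. The combinatorics has to be arranged so that, after mutating through the surrounding exceptional collections, the blocks $D(\shM_+)$ and $D(\shM_-)$ end up in matching positions, and this alignment must be kept compatible with the base $B$ so the argument upgrades to the relative setting. This is the delicate technical point.
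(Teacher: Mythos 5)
Your three-stage plan coincides with the paper's strategy: realize $\shM_{\pm}$ (and $\shN,\shN'$) via the Cayley trick as the jumping loci of the two projections of the generic $(1,1)$ hyperplane section, apply Orlov's formula (\ref{OrlovII}) from each side to obtain the two semiorthogonal decompositions (\ref{sodM+})--(\ref{sodM-}) of $D(M_{D_4})$ (and the analogous ones for the $G_2^{\dagger}$ case), run the mutation pattern of Sections \ref{pfD4} and \ref{pfg2}, and upgrade to the relative setting with Lemma \ref{relvan}. Two remarks on where your account diverges from the paper. First, a small inaccuracy: $M_{D_4}$ is not a common resolution of a birational correspondence between $\shM_+$ and $\shM_-$; those K3 fibrations are derived equivalent but in general not birational, and $M_{D_4}$ is simply the Cayley-trick hypersurface carrying both decompositions. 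Second, and more substantively, the ``hard part'' you single out is not actually a new difficulty: already in the ambient flop proof of Section \ref{pfD4} the block $D(X_+)$ has no exceptional objects and is mutated as a single unit, so $D(\shM_{\pm})$ being non-exceptional changes nothing about the combinatorics, and no realignment of the pattern is required. The observation that makes the argument genuinely painless --- and which your proposal does not isolate --- is the structural parallel between the adjunction triangle (\ref{dis}) for $j:E\hookrightarrow X$ and the triangle (\ref{dis2}) for $i:M_{D_4}\hookrightarrow E$: each reduces the required $\RR\Hom$ vanishings between pushed-forward objects to a pair of $\RR\Hom$ groups on $E$ differing by a $(1,1)$-twist, so the computations that prove Lemma \ref{van} apply to $M_{D_4}$ by exactly the same projective-bundle and Borel--Weil--Bott arguments, and the mutation sequence of Section \ref{pfD4} transfers essentially verbatim.
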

 When the base $B$ is a single point, Theorem \ref{K3} gives a derived equivalence for two degree 12 K3 surfaces $D(M_+)\cong D(M_-)$ inside 5-dimensional quadrics. It was first proved in \cite{ito2020derived} by Mukai's description for K3 surfaces of degree 12 and later by Kapustka and Rampazzo (\cite{kapustka2022mukai}) using the derived Torelli theorem. In loc. cit., the authors also proved the derived equivalences of the K3 fibrations under the assumption that the base $B$ has no odd cohomology. We give a proof of the derived equivalence based on the $D_4$ and $G_2^{\dagger}$ flop regardless of that restriction (Section \ref{K312}).

The rest of this paper is organized as follows. In Section \ref{D4}, the $D_4$ flop is discussed in detail. In Section \ref{pfD4}, we give a detailed proof of the derived equivalence for the $D_4$ flop.In Section \ref{rel}, relative $D_4$, $G_2^{\dagger}$ flops, and their derived equivalences are discussed. In Section \ref{G2}, we construct the $G_2^{\dagger}$ roof from the $D_4$ roof and sketch the proof of the derived equivalences of the $G_2^{\dagger}$ flop. In Section \ref{K312}, pairs of K3 surface fibrations from the $D_4$ and $G_2^{\dagger}$ roof are discussed.

\subsection*{Acknowledgments}
This work was announced at the MIST workshop held at the Chinese University of Hong Kong in April 2024. The author would like to thank Professor Conan Leung for organizing the workshop and for valuable suggestions, Marco Rampazzo for introducing the $G_2^{\dagger}$ case, and Ki-Fung Chan, William Donovan, Wehei Hara, and Zhan Li for many helpful discussions.  

\section{Flops of Type $D_4$}\label{D4}

\subsection{Orthogonal Grassmannians}
Fix a vector space $V_{8}\cong \CC^{8}$ and a nondegenerate quadratic form $q$ on it. For every $k\leq 4$, the orthogonal Grassmannian $OG(k,V_{8})$ is defined by 
$$\{V_k:\,V_k\leq V_8, \dim V_k=k, q|_{V_k}=0\},$$
which is a smooth homogeneous variety of the algebraic group $spin(8,\CC)$. When $k=4$, $OG(4, V_{8})$ is disconnected with precisely two connected components $\spinor_+$ and $\spinor_-$, both of which are isomorphic to each other and called the spinor variety of $spin(8,\CC)$. Let $\Delta_{\pm}$ be the two spinor representation of $spin(8, \CC)$, and there is an embedding $\spinor_{\pm}\hookrightarrow \PP(\Delta_{\pm}).$ Denote by $h_+$ the very ample divisor class of $\spinor_{+}$ under this embedding, and similar for $h_-$.  

Now consider $OGr(3, V_{8})$ and fix an isotropic $V_{3}\in OGr(3, V_{8})$. Observe that there are precisely two ways to extend $V_{3}$ to an isotropic of dimension $4$, which gives two $\PP^{3}$ fibrations $p_{\pm}: OGr(3, V_{8})\longrightarrow \spinor_{\pm}$, both of which admits $\shO(1,1):=p_+^*\shO(h_+)\otimes p_-^*\shO(h_-)$ as their common tautological line bundles. Fano varieties with such structures are called roofs (see \cite{kanemitsu2022mukai}). In loc. cit., $OGr(3, V_{8})$ is called a $D_4$ roof, which corresponds to the following marked $D_4$ Dynkin diagram:
\begin{center}
 \dynkin[edge length=1.5, root radius=.1cm]{D}{**XX}.
\end{center}

\subsection{$D_4$ roof and associated flop} The $D_4$ roof structure can be summarized as follows
\begin{equation}\label{D4roof}
\begin{tikzcd}
                        &  OGr(3, V_{8}) \arrow[dl, swap, "p_+"] \arrow[dr,"p_-"] \\
                         \spinor_+ &  &\spinor_-,
\end{tikzcd}
\end{equation}
and the $\PP^{3}$ fibrations $p_{\pm}$ are given by
\[
OGr(3,V_8)\cong \PP_{\spinor_{\pm}}(\shU_{\pm}^{\vee}),
\]
where $\shU_{\pm}^{\vee}$ is the dual of the tautological rank $4$ vector bundle on $\spinor_{\pm}$ respectively.
Let $\shO(1,1)=p_+^*\shO(h_+)\otimes p_-^*\shO(h_-)$, then
\begin{align}\label{pj}
p_{\pm*}\shO(1,1)=\shU_{\pm}(2h_{\pm}).
\end{align}
The relative Euler sequence for $p_{\pm}$ is 
\begin{equation}\label{releuler}
\begin{tikzcd}
0\arrow[r] & \shO(-1,-1) \arrow[r] & \shU_{\pm}^{\vee}(-2h_{\pm}) \arrow[r] & \shV^{\vee}(-2h_{\pm}) \arrow[r] &0,
\end{tikzcd}
\end{equation}
where $\shV$ denotes the tautological rank 3 bundle on $OGr(3, V_8)$.  
Let $X_{\pm}=\Tot_{\spinor_{\pm}}(\shU_{\pm}^{\vee}(-2h_{\pm}))$. Then, by the construction of generalized Grassmannian flops (Section 4 of \cite{kanemitsu2022mukai} or Section 2 of \cite{cflip}), there is an isomorphism
$$Bl_{\spinor_+}X_+\cong Bl_{\spinor_-}X_-:=X,$$
and we have the flop diagram:
\begin{equation}\label{D4flopdiag}
\begin{tikzcd}
&&E=OGr(3,V_{8}) \arrow[ddll,swap,"p_+"] \arrow[d, hook,"j"]\arrow[ddrr,"p_-"] \\
&&  \arrow[dl,swap,"\pi_+"]  X \arrow[dr,"\pi_-"]\\
  \spinor_+ \arrow[r,hook,"i_+"]& X_+ \arrow[rr,dashed]    & & X_- & \arrow[l, hook',swap, "i_-"]\spinor_- .
\end{tikzcd}
\end{equation}
By Orlov's blow-up formula, we have the two following semiorthogonal decompositions (SOD) on $D(X)$:
\begin{align*}
D(X)&=\langle \pi_+^*D(X_+), j_*p_+^*D(\spinor_+),  j_*(p_+^*D(\spinor_+)\otimes \shO(h_-)), j_*(p_+^*D(\spinor_+)\otimes \shO(2h_-))\rangle\\
&=\langle \pi_-^*D(X_-), j_*p_-^*D(\spinor_-),j_*(p_-^*D(\spinor_-)\otimes \shO(h_+)), j_*(p_-^*D(\spinor_-)\otimes \shO(2h_+))\rangle.
\end{align*}

For simplicity, we may rewrite the two SODs of $D(X)$ in the following forms by omitting the obvious functors like pullbacks and pushforwards:
\begin{align}
D(X)&=\langle D(X_+), D(\spinor_+), D(\spinor_+)(h_-), D(\spinor_+)(2h_-)\rangle\label{sod+}\\
&=\langle D(X_-), D(\spinor_-), D(\spinor_-)(h_+), D(\spinor_-)(2h_+)\rangle.\label{sod-}
\end{align}

The basic strategy for the proof of $D_4$ flop (also $G_2^{\dagger}$) in Theorem \ref{thm} is to mutate SOD(\ref{sod+}) to SOD(\ref{sod-}) by some concrete mutations. This usually works when the SOD of both $\spinor_+$ and $\spinor_-$, and hence the left orthogonal of both ${}^{\perp}D(X_+)$ and ${}^{\perp}D(X_-)$ are given by homogeneous vector bundles. Note that $\spinor_{\pm}$ is a 6-dimensional quadric in $\PP^7$, and $D(\spinor_{\pm})$ admits a full exceptional collection consisting of $\shO$, spnior bundles $\shU_{\pm}^{\vee}$ and their multiple twists by $\shO(h_{\pm})$.  

\begin{proposition}[\cite{kapranov1988derived}]\label{S4+}
    $D(\spinor_+)$ admits a full exceptional collection, which can be represented by the diagram in Figure \ref{S4} (up to multiple twists of $\shO(h_+)$).
    \begin{figure}[ht!]
    \centering
    \includegraphics[width=0.5\linewidth]{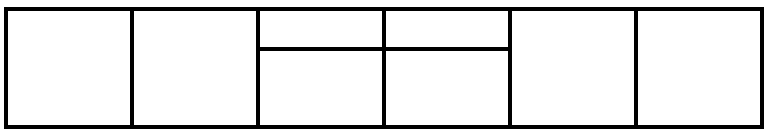}
    \caption{SOD of $\spinor_4^+$}\label{S4}
\end{figure}

Here, the cell \includegraphics[width=0.035\linewidth]{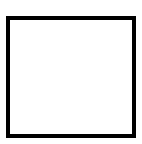} represents $\langle \shO\rangle$
and the cell \includegraphics[width=0.035\linewidth]{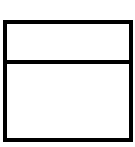} represents $\langle \shO,\shU_+^{\vee}\rangle$.
\end{proposition}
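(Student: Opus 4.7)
The plan is to invoke Kapranov's classical theorem on the derived category of smooth quadrics, specialized to $\spinor_+$. Via the spinor embedding $\spinor_+ \hookrightarrow \PP(\Delta_+) = \PP^7$, the variety $\spinor_+$ is a six-dimensional smooth quadric, so by \cite{kapranov1988derived} the category $D(\spinor_+)$ carries a full exceptional collection of length eight, consisting of twists of $\shO$ together with the two spinor bundles of the quadric. Since Figure \ref{S4} also displays eight cells (counting the two-term cells twice), the statement is a matter of translating Kapranov's collection into the intrinsic homogeneous-bundle notation of $\spinor_+$.

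First I would identify the two spinor bundles of $Q_6 \cong \spinor_+$ with bundles intrinsic to the homogeneous structure. By triality for $\operatorname{Spin}(8,\CC)$, the tautological rank four bundle $\shU_+$ on $\spinor_+$ corresponds to one of the two spinor bundles of $Q_6$; the other spinor bundle admits a similar interpretation involving $\shU_+$ and twists by $\shO(h_+)$. Consequently Kapranov's collection can be rewritten entirely in terms of the bundles $\shO(ih_+)$ and $\shU_+^{\vee}(ih_+)$ for suitable integers $i$, which are the two types of homogeneous summands appearing in the cells of Figure \ref{S4}.

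Next I would reorganize the collection into the blocks displayed in the figure. The exceptionality within each two-term block $\langle \shO,\shU_+^{\vee}\rangle$ and the semiorthogonality between distinct blocks both reduce to cohomology vanishings on $\spinor_+$ for twists of $\shO$, $\shU_+$, and $\shU_+\otimes \shU_+^{\vee}$. All of these are standard consequences of the Borel--Bott--Weil theorem applied to the rational homogeneous variety $\spinor_+ = \operatorname{Spin}(8,\CC)/P$, together with the fact that an exceptional collection remains one after twisting every object by the same line bundle.

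The main obstacle, and essentially the only real work, is the bookkeeping: choosing an admissible linear order on the cells of Figure \ref{S4} so that the outcome is a genuine SOD and not merely a mutually semiorthogonal family of blocks, and verifying that the residual $\Ext$ obstructions between blocks in the wrong direction all vanish. Modulo this finite list of Borel--Bott--Weil computations, the proposition is a restatement of Kapranov's theorem in the notation adapted to the $D_4$ roof.
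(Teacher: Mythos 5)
Your proposal is correct and matches the paper's treatment: the paper offers no proof beyond citing \cite{kapranov1988derived}, using exactly the identification of $\spinor_+$ with a six-dimensional quadric and the rewriting of Kapranov's collection (line bundle twists plus the rank-four spinor bundle, which triality identifies with $\shU_+^{\vee}$ up to twist) in the staggered block form of Figure \ref{S4}. The Borel--Bott--Weil bookkeeping you flag is the standard rearrangement and is left implicit in the paper as well.
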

To make the sequences of mutations readable, we borrow the notion of \emph{chessboard} in \cite{jiang2021categorical}, which originates from Kuznetsov \cite{kuznetsov2007homological} and Richard Thomas \cite{thomas2018notes} used in the theory of homological projective duality. 

Making use of Proposition \ref{S4+}, the SOD of ${}^{\perp}D(X_+)$ in (\ref{sod+}) can be represented by a two-dimensional diagram (see Figure \ref{X+}), called chessboard in the following sense:  
\begin{enumerate}
\item There are two types of cells, and each type represents a subcategory generated by some exceptional objects of ${}^{\perp}D(X_+)$.
\begin{itemize}
    \item The cell \includegraphics[width=0.035\linewidth]{attachments/O.png} represents $\langle \shO_E\rangle$
    \item The cell \includegraphics[width=0.035\linewidth]{attachments/OS.png} represents $\langle \shO_E, \shU_+^{\vee}\rangle$
\end{itemize}
\item A pair of integers $(a,b)$ (the coordinate) attached to each cell means tensoring the cell with $\shO_E(a, b):=\shO_E(ah_++bh_-)$. The plane coordinate system used here is the regular one, i.e., with an increase $h_+$-value from left to right and an increase $h_-$-value upwards. Clearly, the coordinates of cells are determined once any one of them is prescribed. 
\end{enumerate}
\begin{figure}[h!]
    \centering
    \includegraphics[width=0.5\linewidth]{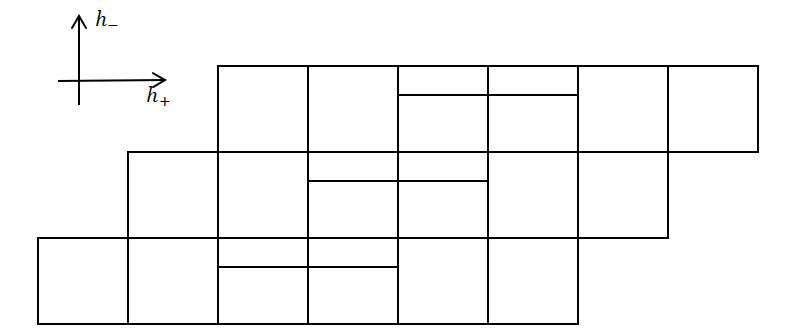}
    \caption{SOD of ${}^{\perp}D(X_+)$}\label{X+}
\end{figure}

SOD of ${}^{\perp}D(X_-)$ in (\ref{sod-}) can be represented by a similar chessboard in Figure \ref{X-}. 
\subsection{Triality} There are three $D_4$ roofs with the following markings:
\begin{center}
\dynkin[edge length=1.5, root radius=.1cm]{D}{**XX}
 \dynkin[edge length=1.5, root radius=.1cm]{D}{X*X*}
 \dynkin[edge length=1.5, root radius=.1cm]{D}{X**X} 
\end{center}
which are isomorphic to each other by triality of the $D_4$ diagram. The last two roofs can be summarized as
\begin{equation}\label{D4+-}
\begin{tikzcd}
                        & OFl(1, 4, V_8)_{\pm}\arrow[dl, swap, "q_{1\pm}"] \arrow[dr,"q_+"] \\
                         OGr(1, V_8):=Q_6 &  & OGr(4, V_8)_{\pm}=\spinor_{\pm}.
\end{tikzcd}
\end{equation}
Here, $OFl(1, 4, V_8)_{\pm}$ denotes the connected component of the isotropic partial $(1,4)$-flag variety in $V_8$ covering $\spinor_{\pm}$ respectively. Denote $h_1$ the hyperplane class of $Q_6$, and we have
$$q_{\pm*}\shO(h_1+h_{\pm})=\shU_{\pm}^{\vee}(h_{\pm}), \, \,q_{1\pm*}\shO(h_1+h_{\pm})=\shS_{\pm}(h_1).$$
$\shS_+$ and $\shS_-$ are the two spinor bundles on $Q_6$. The projective bundle structures are
$$q_{\pm}: \PP_{\spinor_{\pm}}(\shU_{\pm})\longrightarrow \spinor_{\pm},\,\,
q_{1\pm}: \PP_{Q_6}(\shS_{\pm}^{\vee})\longrightarrow Q_6.$$ 
Hence, there are three isomorphic $D_4$ flops:
\begin{align}
 & \Tot_{\spinor_{+}}(\shU_{+}^{\vee}(-2h_{+}))\dashrightarrow \Tot_{\spinor_{-}}(\shU_{-}^{\vee}(-2h_{-})) \label{D4flop}\\
& \Tot_{Q_6}(\shS_{+}^{\vee}(-h_1))\dashrightarrow \Tot_{\spinor_{+}}(\shU_{+}(-h_{+}))\label{D4+flop}\\
& \Tot_{\spinor_{+}}(\shU_{-}(-h_{-}))\dashrightarrow\Tot_{Q_6}(\shS_{-}^{\vee}(-h_1)).\label{D4-flop}
\end{align}
Then Theorem \ref{thm} will give derived equivalences for all these three flops. 

\subsection{Some cohomological Results}
In the rest of this section, we give some cohomological results that are needed in the proof of Theorem \ref{thm}.
\begin{lemma} \label{van}
On the resolution space $X$, we have the following 
\begin{enumerate}
   \item $\RR\Hom (\shO_E(a, -1),\shO_E)=0$ if $a=1, 2, 3, 4$;
   \item $\RR\Hom(\shO_E(3,-2), \shO_E)=0$;
   \item $\RR\Hom(\shO_E,\shU_+^{\vee}(-2, 1))=0$;
   \item $\RR\Hom(\shO_E, \shU_+(-2,1))=0$;
   \item $\RR\Hom(\shO_E, \shU_+^{\vee}(-1,1))=\CC[0]$.
   \item $\LL_{\shO_E(1,-1)}\shU_+^{\vee}=\shV^{\vee}$
   \end{enumerate}
   Note that we omit $j_*$ in the above equalities. 
\end{lemma}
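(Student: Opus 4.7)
The plan is to treat (1)--(5) uniformly via adjunction for the divisor inclusion $j\colon E\hookrightarrow X$, and then derive (6) as a direct consequence of (5) together with the Euler sequence (\ref{releuler}). First I identify the normal bundle $N_{E/X}$. Since $X=Bl_{\spinor_+}X_+$, we have $E\cong\PP_{\spinor_+}(\shU_+^{\vee}(-2h_+))=\PP_{\spinor_+}(\shU_+^{\vee})$; comparing the tautological sub $\shO_{p_+}(-1)\hookrightarrow p_+^*\shU_+^{\vee}$ against (\ref{releuler}) gives $\shO_{p_+}(k)=\shO_E(-k,k)$ and $N_{E/X}=\shO_E(-1,-1)$. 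Because the Koszul differential $\shO_X(-E)\to\shO_X$ restricts to zero on $E$, the derived pullback splits as $Lj^*j_*F=F\oplus F\otimes N_{E/X}^{\vee}[1]$, so adjunction produces
\[
\RR\Hom_X(j_*F,j_*G)\;\cong\;\RR\Hom_E(F,G)\;\oplus\;\RR\Hom_E\bigl(F,\;G\otimes\shO_E(-1,-1)\bigr)[-1].
\]
Thus each of (1)--(5) reduces to two independent cohomology computations on $E$.

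Next I push those computations forward along the $\PP^3$-bundle $p_+\colon E\to\spinor_+$. Using $p_{+*}\shO_{p_+}(k)=\Sym^k\shU_+$ for $k\ge 0$ (together with $Rp_{+*}\shO_{p_+}(k)=0$ for $-3\le k\le -1$) and the projection formula, every Ext group appearing in (1)--(5) converts into cohomology on $\spinor_+$ of twists of $\shO$, $\shU_+$, $\shU_+^{\vee}$, $\Sym^2\shU_+$, or $\shU_+\otimes\shU_+^{\vee}$ by powers of $\shO(h_+)$. These all vanish in the required ranges (and, for (5), leave a one-dimensional $\Hom$ in degree zero) by semiorthogonality in Kapranov's full exceptional collection on the six-dimensional quadric $\spinor_+$ (Proposition \ref{S4+}), built from the line bundles $\shO(kh_+)$ and the spinor-type bundle $\shU_+^{\vee}$ at appropriate twists. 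The generator of the one-dimensional $\Hom$ in (5) is then the natural inclusion arising from the Euler sequence.

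Item (6) is then immediate. Tensoring (\ref{releuler}) by $\shO_E(2h_+)$ yields
\[
0\longrightarrow\shO_E(1,-1)\longrightarrow\shU_+^{\vee}\longrightarrow\shV^{\vee}\longrightarrow 0,
\]
so the generator of $\RR\Hom(\shO_E(1,-1),\shU_+^{\vee})=\CC[0]$ from (5) is exactly this inclusion; the cone of the corresponding evaluation map is the quotient $\shV^{\vee}$, which by definition is $\LL_{\shO_E(1,-1)}\shU_+^{\vee}$. I expect the main obstacle to be the bookkeeping in the second step: tracking the projectivization conventions, the identification $\shO_{p_+}(k)=\shO_E(-k,k)$, and the precise twists of the tautological bundles on $\spinor_+$ that have vanishing cohomology, so that every required vanishing drops out of the Kapranov collection without sign or indexing ambiguity.
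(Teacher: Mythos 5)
Your proposal follows essentially the same route as the paper: adjunction for the divisor $j\colon E\hookrightarrow X$ with $N_{E/X}=\shO_E(-1,-1)$ (the paper uses the adjunction triangle $-\otimes\shO(1,1)[1]\to j^*j_*\to\id$ rather than the split form, but for the vanishing/one-dimensionality claims this makes no difference), followed by pushing down to $\spinor_+$ and invoking the exceptional collection on the six-dimensional quadric; item (6) is obtained in both cases from (5) and the twisted relative Euler sequence. The one place where your uniform recipe is not actually justified as stated is item (2): pushing $\RR\Hom_E(\shO(3,-2),\shO)=H^\bullet(E,\shO(-3,2))$ along $p_+$ produces $H^\bullet(\spinor_+,\Sym^2\shU_+(-h_+))$, and $\Sym^2\shU_+$ is not an object of the Kapranov collection, so its vanishing is not a semiorthogonality statement — you would need a separate (e.g.\ Borel--Weil--Bott) argument. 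The paper avoids this by pushing that particular term along the \emph{other} projection: $\shO(-3,2)=\shO_{p_-}(-3)\otimes p_-^*\shO(-h_-)$, so $Rp_{-*}$ kills it outright since the fibers are $\PP^3$. With that term handled this way (or with an independent proof that $H^\bullet(\Sym^2\shU_+(-h_+))=0$), your argument is complete.
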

\begin{proof}
For any $A, B\in D(E)$, we have the following by adjunction
$$\RR \Hom_X(j_*A, j_*B)=\RR \Hom_E(j^*j_*A, B).$$
For the closed embedding $j: E\hookrightarrow X$, there is an adjunction triangle
$$ -\otimes \shO(1,1)[1]\rightarrow j^*j_*\rightarrow id \xrightarrow{[1]}.$$
Then, there is a distinguished triangle
\begin{align}\label{dis}
 \RR \Hom_E(A, B)\rightarrow \RR \Hom_E(j^*j_*A, B)\rightarrow \RR \Hom_E (A(1,1)[1],B))\xrightarrow{[1]}.   
\end{align}
To show (1) holds, we are sufficient to show 
\begin{enumerate}
    \item[(i)] $\RR \Hom_E(\shO(a,-1), \shO)=0$ 
    \item[(ii)] $\RR \Hom_E(\shO(a+1, 0),\shO)=0.$
\end{enumerate}
By (\ref{pj}), (i) is equivalent to $\RR\Hom_{\spinor_+}(\shO(a-1), \shU_+)=\RR\Hom_{\spinor_+}(\shU_+(a-1),\shO)=0,$ which holds when $1\leq a\leq 6$ by SOD of $D(\spinor_+)$ (Proposition \ref{S4+}). (ii) is equivalent to $\RR\Hom_{\spinor_+}(\shO(a+1),\shO)=0$ which holds when $0\leq a\leq 4$ also by SOD of $D(\spinor_+)$. Hence, (1) follows. 

Similarly, (2) holds if both $\RR\Hom_E(\shO(3, -2), \shO)=0$ and $\RR\Hom_E(\shO(4,-1),\shO)=0$ hold by the distinguished triangle (\ref{dis}). Note that $\RR\Hom_E(\shO(3, -2), \shO)=\RR \Hom_E(\shO(3,3), p_-^*\shO(5h_-))=0$ holds by the Beilinson decomposition of the projective bundle $p_-: E\rightarrow \spinor_-$, and $\RR\Hom_E(\shO(4,-1),\shO)=0$ holds by (i) above.

(3) holds if both $\RR \Hom_E(\shO, \shU_+^{\vee}(-2,1))=0$ and $\RR\Hom_E(\shO(1,1), \shU_+^{\vee}(-2,1))=0$ hold by the distinguished triangle (\ref{dis}). Note that $\RR \Hom_E(\shO, \shU_+^{\vee}(-2,1))=\RR\Hom_{\spinor_-}(\shU_+^{\vee}, \shU_+^{\vee}(-h_+))=0$ holds by SOD of $D(\spinor_+)$, and $\RR\Hom_E(\shO(1,1), \shU_+^{\vee}(-2,1))=\RR\Hom_{\spinor_+}(\shO(3h_+),\shU_+^{\vee})=0$ holds also by SOD of $D(\spinor_+)$. 

Notice that the Euler sequence of $\spinor_+\subset Gr(4, V_8)$ is
$$0\longrightarrow \shU_+\longrightarrow V_8^{\vee}\otimes \shO\longrightarrow \shU_+^{\vee}\longrightarrow 0.$$ 
Then (4) holds by (1) and (3). 

(5) holds if $\RR\Hom_E(\shO, \shU_+^{\vee})=\CC[0]$ and $\RR\Hom_E(\shO(1,1), \shU_+^{\vee}(-1,1))=0$, which of both holds by SOD of $D(\spinor_+)$. 

(6) follows from (5) and the relative Euler sequence (\ref{releuler}). 
\end{proof}
\section{Proof Theorem \ref{thm}: $D_4$ Case}\label{pfD4}
In this section, we give a complete proof of the derived equivalence of the $D_4$ flop. We start with the SOD of ${}^{\perp}D(X_+)$ represented by the chessboard in Figure \ref{X+}, where the rightmost corner is attached by $(5,2)$.

\begin{description}
\item[Step 1(Figure \ref{M1})] Move the cells at $(4,2)$ and $(5,2)$ to the position at $(1,-1)$ and $(2,-1)$ respectively, and subsequently move the cells at $(-2,0)$ and $(-1,0)$ to the position at $(1,3)$ and $(2,3)$ respectively. 

This can be done by first mutating $\langle \shO_E(4,2),\shO_E(5,2)\rangle$, the rightmost subcategories of SOD (\ref{sod+}), to the far left, resulting in the leftmost subcategories 
$$S_{X}(\langle \shO_E(4,2),\shO_E(5,2)\rangle)= \langle \shO_E(1,-1),\shO_E(2,-1)\rangle.$$
Here, $S_X\cong -\otimes\shO(3E)[\dim X]$ is the Serre functor of $D(X)$. Then left mutate
$D(X_+)$ through $\langle \shO_E(1,-1),\shO_E(2,-1)\rangle$ to the far left, resulting in 
\begin{align}
  \LL_{\langle \shO_E(1,-1),\shO_E(2,-1)\rangle} D(X_+)=:\shD_1. \label{D1} 
\end{align}
Subsequently, exchange $\langle\shO_E(-2,0), \shO_E(-1,0)\rangle$ and $\langle \shO_E(1,-1),\shO_E(2,-1)\rangle$ by means of Lemma \ref{van} (1), and right mutate $\shD_1$ through $\langle\shO_E(-2,0), \shO_E(-1,0)\rangle$, and finally right mutate $\langle\shO_E(-2,0), \shO_E(-1,0)\rangle$ to the far right, resulting in the upmost subcategories $\langle\shO_E(1,3), \shO_E(2,3)\rangle$. 
\begin{figure}[h!]
    \centering
    \includegraphics[width=0.5\linewidth]{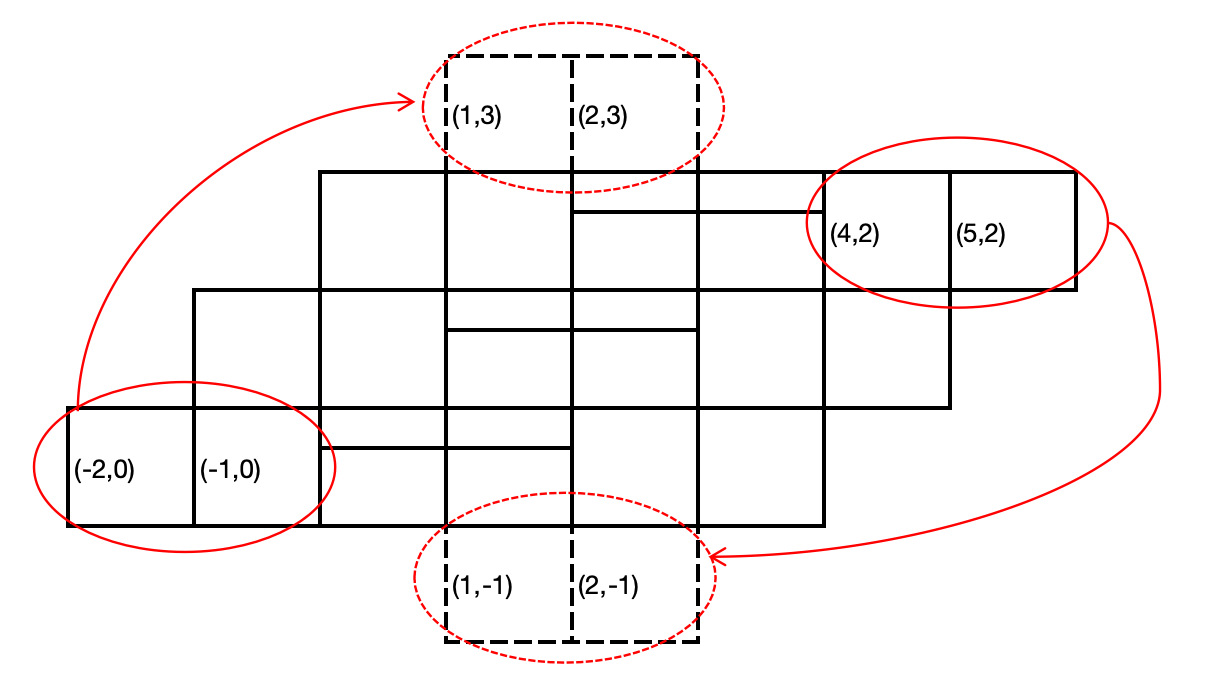}
    \caption{SOD of $^{\perp}\shD_2$}
\end{figure}\label{M1}

Note that the mutated chessboard in Figure \ref{M1} represents the SOD of ${}^{\perp}\shD_2,$ where 
\begin{align}
  \shD_2=\RR_{\langle \shO_E(-2,0), \shO_E(-1,0)\rangle} \shD_1.\label{D2}  
\end{align}

\item[Step 2(Figure \ref{M2})] Insert cells along blue and red arrows as in Figure \ref{M2}. 

We first insert the cells at $(-1,1)$ and $(2,-1)$ into the ones at $(0,0)$ and $(1,0)$ respectively. This can be done by exchanging the cells at $(2,-1)$ and the one at $(0,0)$ by Lemma \ref{van} (1) and (3). We next insert the cells at $(-1,1)$ and $(0,1)$ into the ones at $(0,0)$ and $(1,0)$ respectively. This can be done by first exchanging the cell at $(-1,1)$ and three cells together at $(1,0)$, $(2,0)$ and $(3,0)$ by means of Lemma \ref{van} (1),(2) and (4), and subsequently exchanging the cells at $(0,1)$ and two cells at $(2,0), (3,0)$ by Lemma \ref{van} (1) again. Thereafter, the cell at $(0,0)$ becomes 
\[
\includegraphics[width=0.05\linewidth]{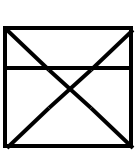}:=\shA=\langle \shO_E(1,-1), \shO_E, \shU_+^{\vee}, \shO_E(-1,1)\rangle,
\]
and the cell at $(1,0)$ becomes $\shA(1,0)$. 
We can exchange $\shO_E(1,-1)$ and $\shO_E$, and left mutate $\shU_+^{\vee}$ through $\shO_E(1,-1)$ in $\shA$ by Lemma \ref{van} (1), (6). Then $\shA$ has the following SOD:
\begin{align}
   \shA=\langle \shO_E, \shV^{\vee}, \shO_E(1,-1), \shO_E(-1,1)\rangle \label{A}
\end{align}
Similarly, other insertions in Figure \ref{M2} follow the same argument as the previous.
\begin{figure}[ht!]
    \centering
    \includegraphics[width=0.5\linewidth]{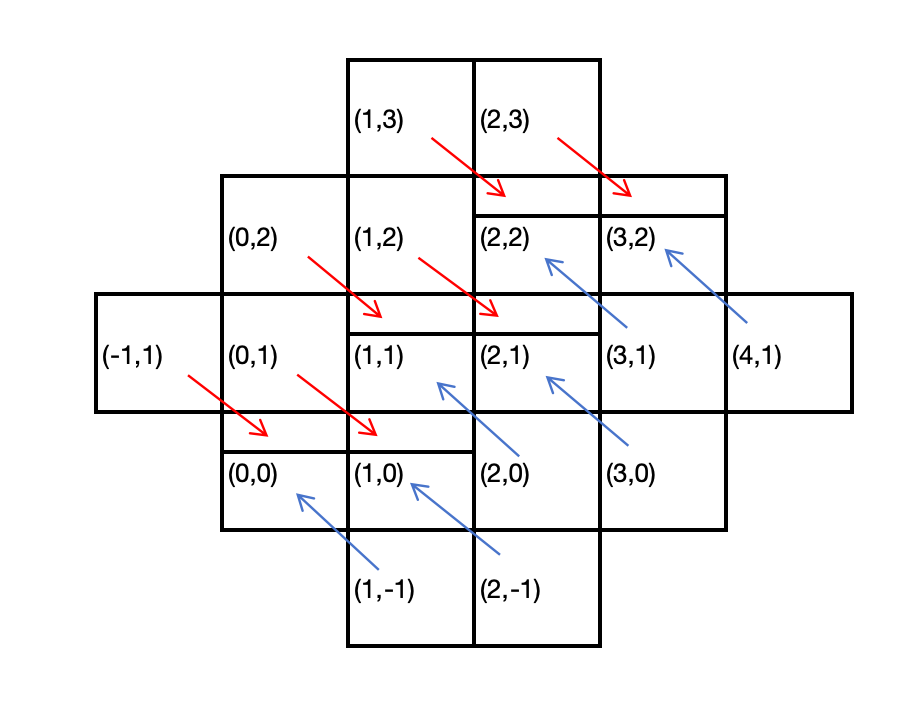}
    \caption{Mutations in $^{\perp}\shD_2$}\label{M2}
\end{figure}

\item[Step 3(Figure \ref{M3})] Left mutate the cells at $(3,2)$ to the far left and left mutate $\shD_2$ to the far left as in Step 1. 
\begin{figure}[h!]
    \centering
    \includegraphics[width=0.5\linewidth]{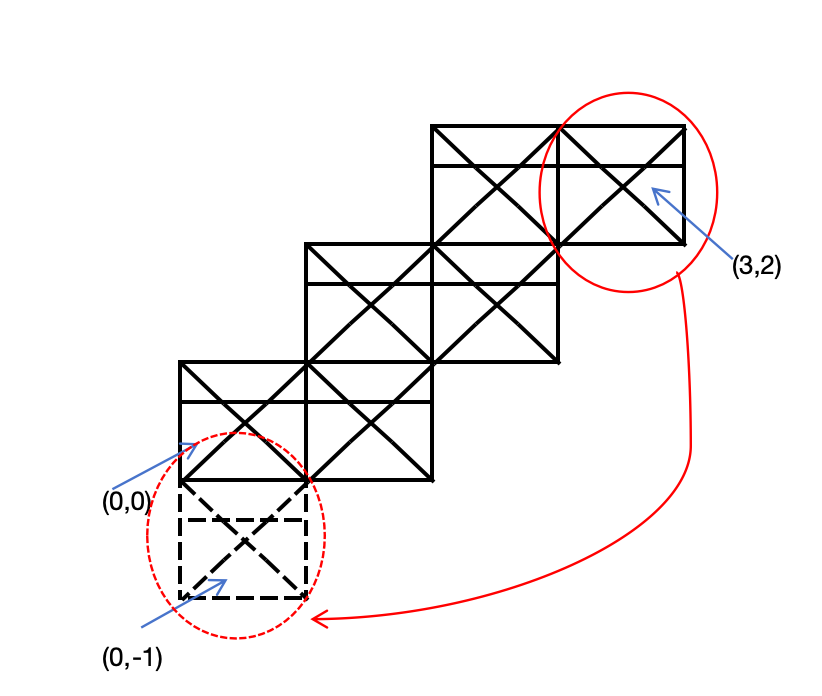}
    \caption{SOD of $^{\perp}\shD_3$}\label{M3}
\end{figure}
Note that the chessboard becomes ${}^{\perp}\shD_3,$ and there is a SOD of $D(X)$
\begin{align}
    D(X)=\langle \shD_3, \shA(0,-1), \shA, \shA(1,0), \shA(1,1), \shA(2,1), \shA(2,2)\rangle,\label{sod++}
\end{align}
where 
\begin{align}
   \shD_3=\LL_{\shA(0,-1)}\shD_2.\label{D3} 
\end{align}

\item[Step 4(Figure \ref{M1-2})] Consider SOD of $^{\perp}D(X_-)$ in (\ref{sod-}) represented by chessboard in Figure \ref{X-}, where the rightmost cell is attached by $(2,4)$. By symmetry of SOD (\ref{sod+}) and (\ref{sod-}), we apply Step 1 and Step 2 to the chessboard in Figure \ref{X-}. 
\begin{figure}[h!]
    \centering
    \includegraphics[width=0.25\linewidth]{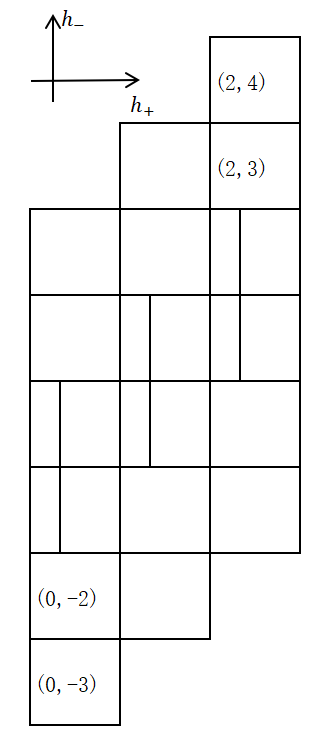}
    \caption{SOD of $^{\perp}D(X_-)$}\label{X-}
\end{figure}
The process is sketched in Figure \ref{M1-2}.
\begin{figure*}[t!]
    \centering
    \begin{subfigure}[t]{0.5\textwidth}
        \centering
        \includegraphics[height=2in]{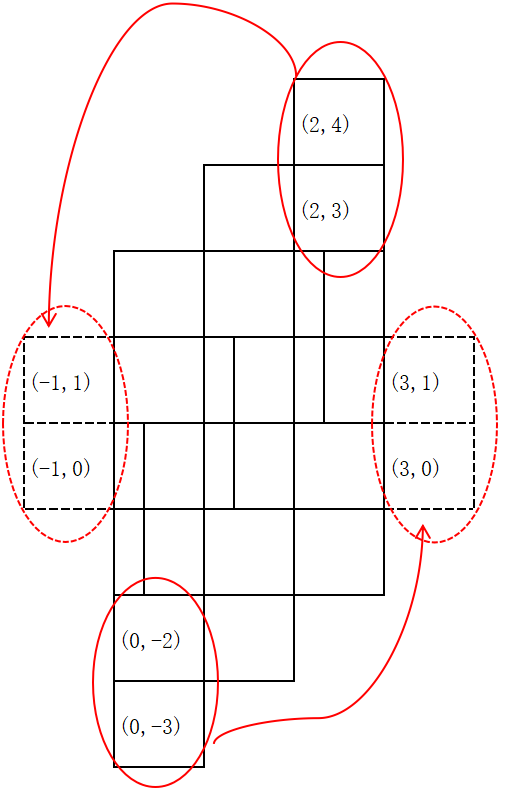}
        %\caption{Lorem ipsum}
    \end{subfigure}%
    ~ 
    \begin{subfigure}[t]{0.5\textwidth}
        \centering
        \includegraphics[height=2in]{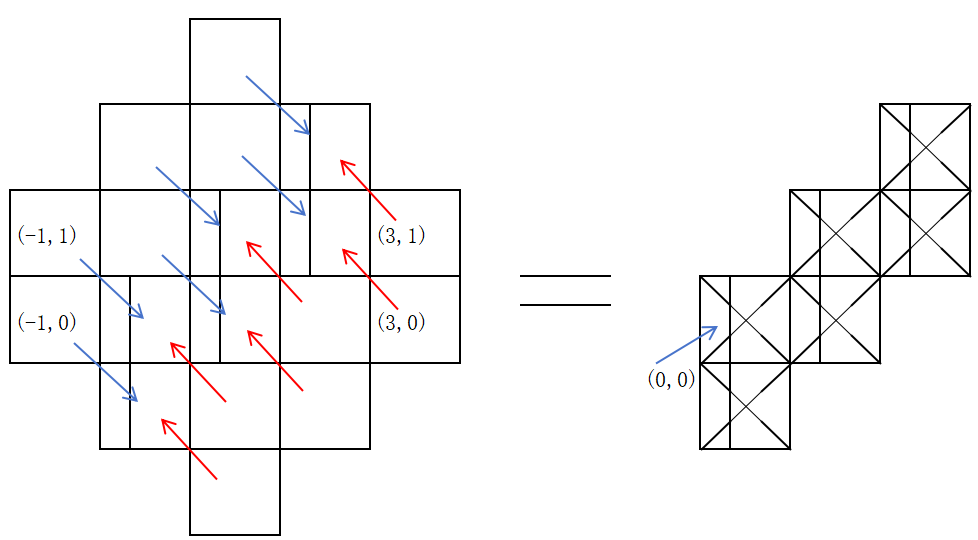}
       % \caption{Lorem ipsum, lorem ipsum,Lorem ipsum, lorem ipsum,Lorem ipsum}
    \end{subfigure}
    \caption{Mutation of SOD(\ref{sod-})}\label{M1-2}
\end{figure*}
Here, the cell at $(0,0)$ in Figure \ref{M1-2} finally becomes
\[
\includegraphics[width=0.05\linewidth]{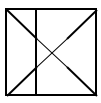}:=\shB=\langle \shO_E(-1,1), \shO_E, \shU_-^{\vee}, \shO_E(1,-1)\rangle,
\]
which also admits a mutation
\begin{align}
    \shB=\langle \shO_E,\shV^{\vee},\shO_E(-1,1),  \shO_E(1,-1) \rangle.\label{B}
\end{align}
Notice that the chessboard in Figure \ref{M1-2} represents SOD of $^{\perp}\shD_-$, and SOD (\ref{sod-}) becomes
\begin{align}
   D(X)=\langle \shD_-, \shB(0,-1), \shB, \shB(1,0), \shB(1,1), \shB(2,1), \shB(2,2)\rangle,\label{sod--}
\end{align}
where
\begin{align}   
\shD_-=\RR_{\langle\shO_E(0,-3),\shO_E(0,-2)\rangle}
\LL_{\langle\shO_E(-1,0), \shO_E(-1,1)\rangle} D(X_-).\label{D-}
\end{align}

\item[Step 5] Comparison of SOD (\ref{sod++}) and SOD (\ref{sod--}). Note that $\shA=\shB$ by (\ref{A}) and (\ref{B}). Hence, there is an equivalence of $$\shD_3\cong \shD_-.$$ 
By the mutations (\ref{D1}), (\ref{D2}), (\ref{D3}) and (\ref{D-}), there is an equivalence
\[
\Phi: D(X_+)\cong D(X_-),
\]
where $\Phi$ is given by
$$
\pi_{-*}\circ\RR_{\langle\shO_E(-1,0), \shO_E(-1,1)\rangle} \circ\LL_{\langle\shO_E(0,-3),\shO_E(0,-2)\rangle} \circ\LL_{\shA(0,-1)} \circ \RR_{\langle \shO_E(-2,0), \shO_E(-1,0)\rangle}\circ \LL_{\langle\shO_E(1,-1),\shO_E(2,-1)\rangle}\circ\pi_+^*.$$
\end{description}

\section{$G_2^{\dagger}$ roof and flop}\label{G2}
\subsection{Ottivani Bundles and $G_2^{\dagger}$ roof}
Let $V_7\subset V_8$ be any hyperplane such that the restriction of the non-degenerate quadratic form $q$ onto $H$ is still non-degenerate. Then, there is an isomorphism between the spinor variety and the maximal isotropic Grassmannian in $H$:
\begin{align}
  & \spinor_{\pm}\xrightarrow{\cong}OGr(3, H)\\\nonumber
  &   V_4 \mapsto V_4\cap V_7.
\end{align}
 This implies that there is a rank 3 vector bundle $\shG_{\pm}$ on $\spinor_{\pm}$, which corresponds to the tautological bundle on $OGr(3, H)$, satisfying the following short exact sequence:
\begin{equation}\label{Ott+-}
\begin{tikzcd}
0\arrow[r] & \shO \arrow[r,"s_{\pm}"] & \shU_{\pm}^{\vee} \arrow[r] & \shG_{\pm}^{\vee} \arrow[r] &0.
\end{tikzcd}
\end{equation}
Note that the section $s_{\pm}: \shO\rightarrow \shU_{\pm}^{\vee}$ in (\ref{Ott+-}) corresponds to the choice of $H\subset V_8$ above.
By triality of $D_4$, there are two short exact sequences on $OGr(1, V_8):=Q_6$:
\begin{equation}\label{Ott1}
\begin{tikzcd}
0\arrow[r] & \shO \arrow[r, "s_{1\pm}"] & \shS_{\pm} \arrow[r] & \shG_{1\pm}^{\vee} \arrow[r] &0.
\end{tikzcd}
\end{equation}
Here $\shS_{\pm}$ denote the spinor bundles on $Q_6$ defined by $q_{1*}q_{\pm}^*(\shO(h_{\pm}))$, respectively, and the section $s_{1\pm}$ correspond to the choice of suitable hyperplanes of the spinor representations $\Delta_{\pm}$.

The total space of projective bundle $\PP(\shG_{\pm})\subset \PP(\shU_{\pm})$ is isomorphic to $OFl(1,3,H)$, the isotropic partial $(1,3)$-flag variety in $H$, which admits a projective bundle on the 5-dimensional quadric $OGr(1, H)$:
\begin{equation}\label{B3}
\begin{tikzcd}
                        & OFl(1, 3, V_7)\cong \PP_{\spinor_{\pm}}(\shG_{\pm}) \arrow[dl, swap, "p_1"] \arrow[dr,"p_3"] \\
                         OGr(1, V_7):=Q &  & OGr(3, V_7)\cong \spinor_{\pm}.
\end{tikzcd}
\end{equation}
Note that diagram (\ref{B3}) is nothing but the restriction of diagram (\ref{D4+-}) to $\PP(\shG_{\pm})\subset \PP(\shU_{\pm})$. The projective bundle $p_1$ is isomorphic to $\PP(\shS^{\vee})$, where $\shS\cong \shS_{\pm}|_Q$ is the unique spinor bundle on $Q$.
The short exact sequences (\ref{Ott1}) restricts to the same one on $Q$:
\begin{equation}\label{Ott}
\begin{tikzcd}
0\arrow[r] & \shO \arrow[r, "s"] & \shS \arrow[r] & \shG^{\vee} \arrow[r] &0.
\end{tikzcd}
\end{equation}
We can further restrict diagram (\ref{B3}) to the total space of projective bundle $\PP(\shG)\subset \PP(\shS^{\vee})$, which admits a projective bundle on a smooth hyperplane section $Q'\subset OGr(3, H)$ by our choice of $s$:
\begin{equation}\label{G2+}
\begin{tikzcd}
                        &  R:=\PP(\shG)  \arrow[dl, swap, "p"] \arrow[dr,"p'"] \\
                         Q &  & Q'.
\end{tikzcd}
\end{equation}
Notice that the projective bundle $p'$ is given by $\PP(\shG_+|_{Q'})\cong \PP(\shG)$.
In fact, $R:=\PP(\shG)$ admits a non-homogeneous $G_2$ action (See \cite{kanemitsu2019extremal} and \cite{ottaviani1990cayley}), and the diagram (\ref{G2+}) is then called a $G_2^{\dagger}$ roof.

\begin{remark}
\begin{enumerate}
    \item It is shown in \cite{ottaviani1988spinor} that 
the bundle $\shG^{\vee}$ on the 5-dimensional quadric $Q$ is stable with Chern class $(c_1, c_2, c_3)=(2,2,2)$. Conversely, in loc. cit., any rank three stable bundle with Chern class $(2,2,2)$ on $Q$ (the so-called \emph{Ottaviani bundle}) arises from some short exact sequence of the form ($\ref{Ott}$).
\item The fine moduli space of Ottaviani bundles on $Q$ is naturally isomorphic to $\PP(H^0(Q, \shS))-\spinor^{\vee}$, where $\spinor^{\vee}\subset \PP(H^0(Q, \shS))$ is the projective dual quadric of the spinor variety $\spinor \subset \PP(H^0(Q, \shS)^{\vee})$. 
\item Any Ottaviani bundle on 5-dimensional $Q$ induces a $G_2^{\dagger}$ roof (\cite{ottaviani1988spinor}). 
\end{enumerate}
\end{remark}

\subsection{$G_2^{\dagger}$ flop}
Let $h$ (resp. $h'$) be the ample generator of $\Pic(Q)$ (resp. $\Pic(Q')$) in diagram (\ref{G2+}). Then 
$$p_*\shO(h+h')=\shG^{\vee}(h), \,\, p'_*\shO(h+h')=\shG'^{\vee}(h').$$
Here, $\shG'$ denotes the dual of the corresponding Ottaviani bundle on $Q'$. Then there is a flop ($G_2^{\dagger}$ flop)
\begin{align}\label{G2dagger}
  Y=\Tot_Q(\shG(-h))\dashrightarrow Y'=\Tot_{Q'}(\shG'(-h')),   
\end{align}
which has a common resolution $\Tilde{Y}$ isomorphic to both $Bl_Q(Y)$ and $Bl_{Q'}(Y')$:
\begin{equation}\label{G2+flop}
    \begin{tikzcd}
&&\PP(\shG^{\vee})\cong \PP(\shG'^{\vee}) \arrow[ddll,swap,"p"] \arrow[d, hook,"j"]\arrow[ddrr,"p'"] \\
&&  \arrow[dl,swap,"\pi"]  \Tilde{Y} \arrow[dr,"\pi'"]\\
  Q \arrow[r,hook,"i"]& Y \arrow[rr,dashed]    & & Y' & \arrow[l, hook',swap, "i'"] Q'. 
\end{tikzcd}
\end{equation}
\subsection{$G_2^{\dagger}$ vs $D_4$} From the construction of $G_2^{\dagger}$ flop, one can deduce that the $G_2^{\dagger}$ flop (\ref{G2dagger}) is a strict transformation of the $D_4$ flop (\ref{D4+flop}). 
\begin{equation}\label{G2D4}
    \begin{tikzcd}
        \Tot_{Q_6}(\shS_{+}^{\vee}(-h_1))\arrow[rr,dashed] &&\Tot_{\spinor_{+}}(\shU_{+}(-h_{+}))\\
        Y=\Tot_Q(\shG(-h))\arrow[u,hook]\arrow[rr,dashed] && Y'=\Tot_{Q'}(\shG'(-h'))\arrow[u,hook]
    \end{tikzcd}
\end{equation}
By triality of $D_4$, the relative Euler sequence (\ref{releuler}) on $OGr(3, V_8)$ induces two relative Euler sequences on $OFl(1, 4, V_8)\cong OGr(3, \Delta_-)$, and hence restricts to two short exact sequences on $\PP({\shG})=R$:  \begin{align}
 & 0\longrightarrow \shO(1, -1) \longrightarrow  \shS \longrightarrow \shE \longrightarrow 0 \label{seqfund1}\\
& 0\longrightarrow \shO(-1, 1) \longrightarrow \shS' \longrightarrow \shE\longrightarrow 0 \label{seqfund2}
\end{align}
Here $\shE$ is the rank three vector bundle on $R$ that corresponds to $\shV_3^{\vee}$ on $OGr(3, \Delta_-)$. 
We also need similar cohomological results as stated in Lemma \ref{van}. 
\begin{lemma}\label{van2}
On common resolution $\Tilde{Y}$, we have
    \begin{enumerate}
        \item $\RR \Hom(\shO_{R}(a,-1), \shO_{R})=0$ if $a=1,2,3$
        \item $\LL_{\shO_{R}(1,-1)}\shS=\shE$
    \end{enumerate}
\end{lemma}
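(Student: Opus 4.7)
The plan is to follow the template of Lemma \ref{van}. The key device is the adjunction triangle for the closed embedding $j:R\hookrightarrow \Tilde{Y}$: since the normal bundle is $\shO_R(-1,-1)$, one has
$$-\otimes \shO_R(1,1)[1]\longrightarrow j^*j_*\longrightarrow \id \xrightarrow{[1]},$$
giving, for any $A,B\in D(R)$, the distinguished triangle
\begin{equation*}
\RR\Hom_R(A,B)\longrightarrow \RR\Hom_{\Tilde{Y}}(j_*A,j_*B)\longrightarrow \RR\Hom_R(A(1,1)[1],B)\xrightarrow{[1]}.
\end{equation*}
Every $\Hom$ on $\Tilde{Y}$ is thus reduced to two $\Hom$'s on $R$, and these in turn are transported to the five-dimensional quadric $Q$ via $p:R=\PP(\shG)\to Q$, for which $\shO(1,0)=p^*\shO(h)$, $\shO(0,1)=\shO_p(1)$, $p_*\shO(c,b)=\shO(ch)\otimes \Sym^b\shG^\vee$ for $b\ge 0$, and $\RR p_*\shO(c,b)=0$ for $b=-1,-2$.

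For part (1), with $A=\shO_R(a,-1)$ and $B=\shO_R$ the adjunction triangle reduces the claim to the two vanishings $\RR\Hom_R(\shO_R(a,-1),\shO_R)=0$ and $\RR\Hom_R(\shO_R(a+1,0),\shO_R)=0$. Pushing down via $p$ these become $\RR\Gamma(Q,\shG^\vee(-ah))=0$ for $a=1,2,3$ and $\RR\Gamma(Q,\shO(-kh))=0$ for $k=2,3,4$. The latter is Bott vanishing on the smooth quadric. For the former, tensor the Ottaviani sequence (\ref{Ott}) by $\shO(-ah)$ and take $\RR\Gamma$: the result sits between $\RR\Gamma(Q,\shO(-ah))$ and $\RR\Gamma(Q,\shS(-ah))$, both of which vanish for $a=1,2,3$ by Kapranov's semiorthogonal decomposition of $D(Q)$ generated by the spinor bundle $\shS$ together with $\shO,\shO(h),\ldots,\shO(4h)$.

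For part (2), the short exact sequence (\ref{seqfund1}) exhibits $\shE$ as $\cone(\shO_R(1,-1)\to \shS)$. Since left mutation is defined by the triangle $\RR\Hom(E,F)\otimes E\to F\to \LL_E F$, it suffices to verify that $\RR\Hom_{\Tilde{Y}}(j_*\shO_R(1,-1),j_*\shS)=\CC[0]$ with generator equal (up to a nonzero scalar) to the inclusion in (\ref{seqfund1}). The cleanest route is to apply $\RR\Hom_R(\shO_R(1,-1),-)$ to (\ref{seqfund1}) itself: the one-dimensionality of $\RR\Hom_R(\shO_R(1,-1),\shS)$ then reduces to $\RR\Hom_R(\shO_R(1,-1),\shE)=0$, and the remaining slot of the adjunction triangle reduces to $\RR\Hom_R(\shO_R(2,0),\shS)=0$. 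Both are standard cohomological computations on $Q$ (with the second handled symmetrically on $Q'$ via $p'$ and (\ref{seqfund2}) if convenient), obtained from (\ref{Ott}) and Kapranov's SOD exactly as in part (1). Produced this way, the generator is manifestly the image of $\id_{\shO_R(1,-1)}$, namely the inclusion in (\ref{seqfund1}), and the cone identification yields $\LL_{\shO_R(1,-1)}\shS=\shE$.

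I expect the only delicate point to be the last identification of the generator with the SES inclusion; working through the SES (\ref{seqfund1}) directly rather than through an abstract one-dimensional $\Hom$ space avoids any ambiguity of scalars. The underlying vanishings are otherwise routine, once the pushforward through the $\PP^2$-bundle $p:R\to Q$ is set up.
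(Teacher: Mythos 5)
The paper gives no proof for Lemma \ref{van2} beyond the remark that it is ``similar to Lemma \ref{van},'' and your argument is exactly the intended adaptation of that template: use the adjunction triangle for $j:R\hookrightarrow\Tilde{Y}$ (with normal bundle $\shO_R(-1,-1)$) to reduce each $\RR\Hom$ on $\Tilde{Y}$ to two $\RR\Hom$'s on $R$, push down through the $\PP^2$-bundles $p$ or $p'$, and invoke the Ottaviani sequence (\ref{Ott}) together with the vanishing of $H^\bullet(Q,\shO(-kh))$ and $H^\bullet(Q,\shS(-ah))$, then identify the one-dimensional generator in part (2) via the short exact sequence (\ref{seqfund1}). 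Your proposal is correct and follows essentially the same route the paper has in mind.
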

The proof for Lemma \ref{van2} is similar to Lemma \ref{van}. 
\subsection{The proof of Theorem \ref{thm}: $G_2^{\dagger}$ case}\label{pfg2}
There are two SODs of $D(\Tilde{Y})$ by Orlov's blow-up formula:
\begin{align}
&D(\Tilde{Y})=\langle D(Y), D(Q), D(Q)(h')\rangle \label{sod1}\\
&D(\Tilde{Y})=\langle D(Y'), D(Q'), D(Q')(h)\rangle \label{sod2}
\end{align}
Here, the natural pullback and pushforward functors are omitted for simplicity, as in the $D_4$ case. The 5-dimensional quadric $Q$ also admits a full exceptional collection. 
\begin{proposition}\cite{kapranov1988derived}
    D(Q) admits a full exceptional collection represented by the following chessboard (up to multiple twists of $\shO(h)$):
    \begin{figure}[h!]
    \centering
    \includegraphics[width=0.5\linewidth]{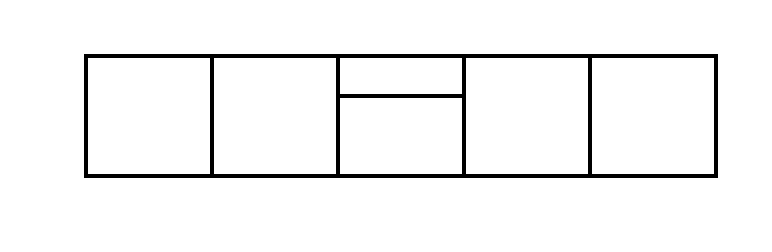}
\end{figure}

Here, the cell \includegraphics[width=0.035\linewidth]{attachments/O.png} represents $\langle \shO\rangle$
and the cell \includegraphics[width=0.035\linewidth]{attachments/OS.png} represents $\langle \shO,\shS\rangle$.
\end{proposition}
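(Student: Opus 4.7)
The plan is to invoke Kapranov's classical computation \cite{kapranov1988derived} of the derived category of a smooth quadric and then read off the chessboard description as a bookkeeping of the resulting exceptional collection.

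First, since $Q$ is a smooth five-dimensional quadric, Kapranov's theorem supplies the full exceptional collection
$$ D(Q)=\langle \shS,\,\shO,\,\shO(h),\,\shO(2h),\,\shO(3h),\,\shO(4h)\rangle, $$
where $\shS$ is the (unique) rank-$4$ spinor bundle and $h$ is the hyperplane class. The length $6$ equals $\dim Q+1=\rank K_0(Q)$. Semi-orthogonality among the line bundles $\shO(kh)$ reduces via Serre duality to Kodaira vanishing on the Fano variety $Q$, while the vanishings $\RR\Hom(\shS,\shO(kh))=0$ and $\RR\Hom(\shO(kh),\shS)=0$ in the appropriate ranges follow from Borel-Weil-Bott applied to $Q\cong\mathrm{Spin}(7)/P$. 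Fullness is the substantive content of Kapranov's paper, which I would import directly.

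Next, I would match the cells of the chessboard with subcollections of this exceptional sequence. The pair $\langle \shO,\shS\rangle$ is itself exceptional and, since $\RR\Hom(\shS,\shO)=0$ while $\RR\Hom(\shO,\shS)=H^{\bullet}(Q,\shS)\neq 0$, it is natural to group the two objects into a single OS cell. The four remaining twists $\shO(kh)$ for $k=1,2,3,4$ then occupy four consecutive O cells laid out along the horizontal axis, with the convention that tensoring by $\shO(h)$ corresponds to a unit shift to the right. The vertical coordinate plays no intrinsic role at this stage since $\Pic(Q)\cong\ZZ\cdot h$; it acquires meaning only after pullback to the common resolution $\Tilde{Y}$, where the class $h'$ enters through $\pi'^{*}$. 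The parenthetical ``up to multiple twists of $\shO(h)$'' simply records the freedom to replace the sequence by its Serre twist $\langle \shS(kh),\shO(kh),\ldots,\shO((k+4)h)\rangle$, which is again full exceptional.

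The main obstacle, if one insisted on a self-contained proof rather than a citation, would be fullness. Semi-orthogonality is a finite Borel-Weil-Bott calculation, but fullness typically requires either an explicit Koszul-type resolution of the diagonal $\Delta\subset Q\times Q$ built from $\shS$ and the line bundles, or the construction of a tilting bundle with endomorphism algebra of finite global dimension. In the present paper no such redo is needed: Kapranov's theorem is cited, and the proposition amounts to transcribing its conclusion into the chessboard language used throughout Section \ref{pfD4}.
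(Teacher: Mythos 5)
Your proposal matches the paper exactly: the proposition is stated purely as a citation of \cite{kapranov1988derived} with no proof supplied, and your transcription of Kapranov's collection into the chessboard notation is precisely the intended content. One internal nitpick: with the paper's normalization of $\shS$ (globally generated, $H^{0}(Q,\shS)\neq 0$, as forced by the sequence $0\to\shO\to\shS\to\shG^{\vee}\to 0$), the exceptional pair must be ordered $\langle\shO,\shS\rangle$ as in the cell, so the collection is $\langle\shO,\shS,\shO(h),\dots,\shO(4h)\rangle$ rather than the $\langle\shS,\shO,\dots\rangle$ you first wrote, which is exceptional only for the Ottaviani-normalized spinor bundle with $H^{\bullet}(Q,\shS)=0$ --- this is consistent with your own observation that $\RR\Hom(\shO,\shS)\neq 0$ while $\RR\Hom(\shS,\shO)=0$.
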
 
We begin with the chessboard $^{\perp}D(Y)$ in (\ref{sod1}), where the rightmost cell is attached $(3,1)$ shown in Figure $\ref{Y}$.  
 \begin{figure}[ht!]
    \centering
    \includegraphics[width=0.5\linewidth]{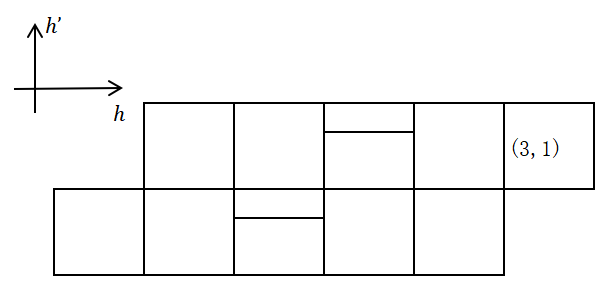}
    \caption{SOD of $^{\perp}D(Y)$}\label{Y}
\end{figure}
\begin{description}
  \item[Step 1](Figure \ref{GS1}) We move the cell at $(3,1)$ to the far left position at $(1,-1)$, and move the cell at $(-2,0)$ to the leftmost position at $(0,2)$ by a similar process as in Step 1 of the $D_4$ case. The Serre functor $S_{\Tilde{Y}}\cong -\otimes\shO(2R)[\dim \Tilde{Y}]$ is applied and Lemma \ref{van2} (1) is used. Note that the right orthogonal of the chessboard (\ref{GS1}) becomes $$\RR_{\shO_{R}(-2,0)}\LL_{\shO_{R}(1,-1)}D(Y):=D_1.$$

   \begin{figure}[ht!]
    \centering
    \includegraphics[width=0.5\linewidth]{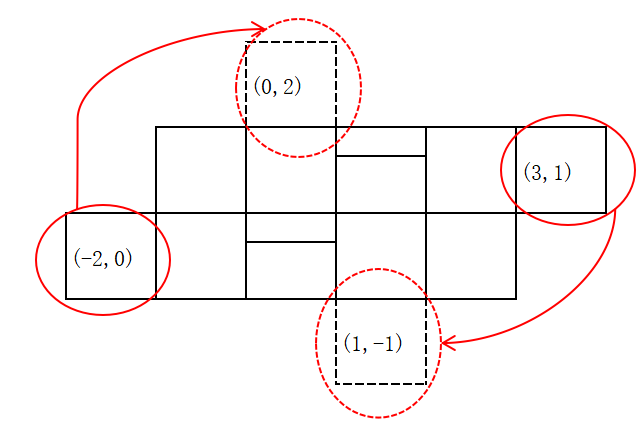}
    \caption{Step 1}\label{GS1}
\end{figure}

  \item[Step 2] (Figure \ref{GM2}) Insert cells at $(1,-1)$ and $(-1,1)$ in to the cells at $(0,0)$, which leads to a new cell $\shA$ at $(0,0)$:
\[
\includegraphics[width=0.05\linewidth]{attachments/B1.png}:=\shA=\langle \shO_{R}(1,-1), \shO_{R}, \shS, \shO_{R}(-1,1)\rangle,
\]
The insertion of $(0,2)$ and $(2,0)$ into $(1,1)$ is similar.  

 \begin{figure}[ht!]
    \centering
    \includegraphics[width=0.5\linewidth]{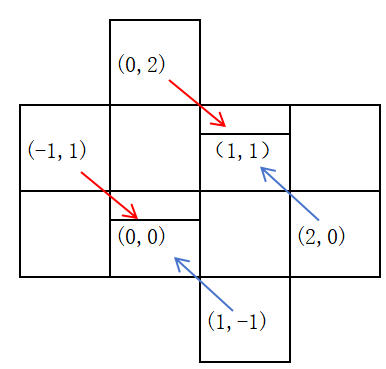}
    \caption{Step 2}\label{GM2}
\end{figure}

\item[Step 3] (Figure \ref{GM3}) Move the cells at $(-1,0)$ to the position at $(1,2)$, and one at $(2,1)$ to the position at $(0,-1)$. Then, the right orthogonal of the mutated chessboard becomes 
$$\RR_{\shO_{R}(-1,0)}\LL_{\shO_{R}(0,-1)}D_1:=D_2.$$
  \begin{figure}[ht!]
    \centering
    \includegraphics[width=0.5\linewidth]{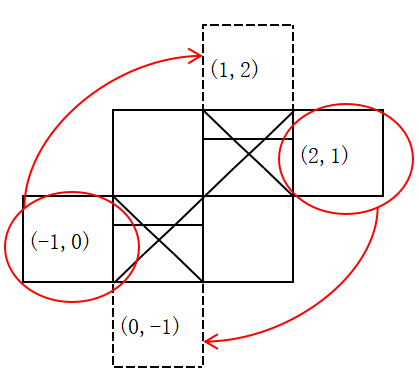}
    \caption{Step 3}\label{GM3}
\end{figure}

\item[Step 4] Consider SOD of $^{\perp}D(Y')$ in (\ref{sod2}) represented by chessboard in Figure \ref{Y'}, where the rightmost cell is attached by $(1,3)$. By symmetry of SOD (\ref{sod1}) and (\ref{sod2}), we apply Step 1 and Step 2 to the chessboard in Figure \ref{Y'}.
\begin{figure}[h!]
    \centering
    \includegraphics[width=0.25\linewidth]{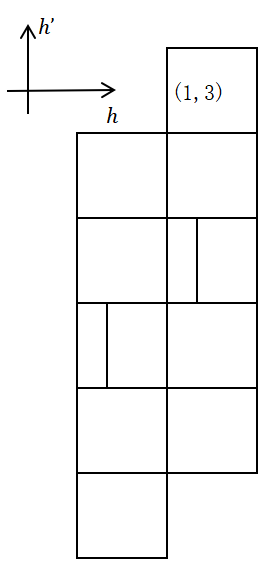}
    \caption{SOD of $^{\perp} D(Y')$}\label{Y'}
\end{figure}
The process is sketched in Figure \ref{GM4-5}, which produces new cells at $(0,0)$ and $(1,1)$:
\[
\includegraphics[width=0.05\linewidth]{attachments/B1dual.png}:=\shB=\langle \shO_{R}(-1,1), \shO_{R}, \shS', \shO_{R}(1,-1)\rangle.
\]
\begin{figure*}[h!]
    \centering
    \begin{subfigure}[t]{0.5\textwidth}
        \centering
        \includegraphics[height=2in]{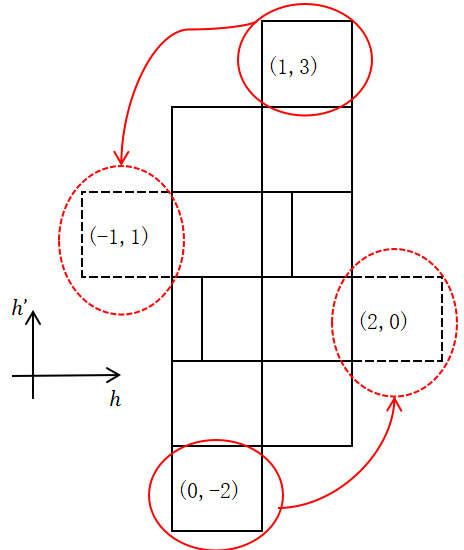}
    \end{subfigure}%
    ~ 
    \begin{subfigure}[t]{0.5\textwidth}
        \centering
        \includegraphics[height=2in]{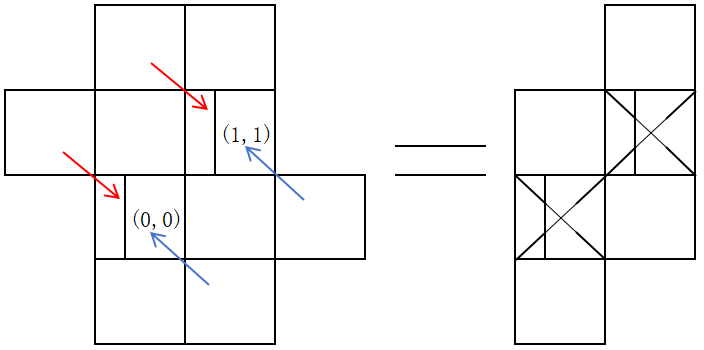}
    \end{subfigure}
    \caption{Mutation of $^{\perp}D(Y')$}\label{GM4-5}
\end{figure*}
\item[Step 5] By using Lemma \ref{van2}, both $\shA$ and $\shB$ can be identified with
$$\langle \shO_{R}, \shE, \shO_{R}(-1,1),\shO_{R}(1,-1)\rangle.$$ 
Hence, there is an equivalence $D_2\cong D'$, which implies that there is an equivalence
$$\phi: D(Y)\cong D(Y'),$$
where
$\phi(-)=\pi'_*\circ \RR_{\shO_{R}(-1,1)}\LL_{\shO_{R}(0,-2)}\circ \RR_{\shO_{R}(-1,0)}\LL_{\shO_{R}(0,-1)}\circ \RR_{\shO_{R}(-2,0)}\LL_{\shO_{R}(1,-1)}\pi^*(-).$
\end{description}

\section{Relative $D_4$ and $G_2^{\dagger}$ Flops}\label{rel}
\subsection{Relative $D_4$ flop over a base $B$} Let $\shP$ be a $spin(8)$-torsor, or a principal $spin(8)$-bundle over a smooth projective variety $B$. There is a relative $D_4$ roof over $B$, and when $B$ is a single point, diagram (\ref{relD4roof}) is (\ref{D4roof}).  
\begin{equation}\label{relD4roof}
\begin{tikzcd}
    & \shP\times_{spin(8)} OGr(3, V_8)\arrow[dl,swap, "p_+"]\arrow[dr,"p_-"]\\
     \shP\times_{spin(8)} \spinor_+ \arrow[dr, "s_+"]&&  \shP\times_{spin(8)} \spinor_-. \arrow[dl,"s_-"]\\
     & B
\end{tikzcd}
\end{equation}
Let $\shW:=\shP\times_{spin(8)}V_8$ be the  corresponding quadratic bundle over $B$ with structure group $spin(8)$. Then $\shP\times_{spin(8)} OGr(3, V_8)$ is the relative orthogonal Grassmannian $OGr(3, \shW)\rightarrow B$, and $\shP\times_{spin(8)} \spinor_{\pm}$ are the connected components $OGr(4,\shW)_{\pm}$ of $OGr(4, \shW)$ respectively. Then there is a relative simple $D_4$ flop over a base $B$ which is adapted to the flop (\ref{D4flopdiag}) by the same construction.
\begin{center}
\begin{tikzcd}
&&OGr(3,\shW) \arrow[ddll,swap,"p_+"] \arrow[d, hook,"j"]\arrow[ddrr,"p_-"] \\
&&  \arrow[dl,swap,"\pi_+"]  \shX \arrow[dr,"\pi_-"]\\
   OGr(4,\shW)_+ \arrow[drr]\arrow[r,hook,"i_+"]& \shX_+ \arrow[rr,dashed]    & & \shX_- & \arrow[l, hook',swap, "i_-"] \arrow[dll] OGr(4,\shW)_-.\\
&& B
\end{tikzcd}
\end{center}
Here, $\shX_{\pm}=\Tot_{OGr(4,\shW){\pm}}(\shU_{\pm}^{\vee}(-2h_{\pm}))$, where $\shU_{\pm}$ and $h_{\pm}$ are relative tautological rank four bundles and relative ample divisors on $OGr(4,\shW)_{\pm}$ respectively.

Since $\shX_+$ and $\shX_-$ are isomorphic after blowing up, the Orlov's formula applies.
\begin{equation}\label{relX}
    D(\shX)=\langle D(\shX_{\pm}), D(OGr(4, \shW)_{\pm}), D(OGr(4, \shW)_{\pm})(h_{\mp}), D(OGr(4, \shW)_{\pm})(2h_{\mp})\rangle
\end{equation}
By applying \cite[Theorem 3.1]{samokhin2007some} to the locally trivial fiberation $s_{\pm}:\,OGr(4, \shW)_{\pm}\longrightarrow B$, there are SODs on $D(OGr(4, \shW)_{\pm})$, represented by the same diagram (\ref{S4}) for SOD of $D(\spinor_{\pm})$. Notice that the cell \includegraphics[width=0.035\linewidth]{attachments/O.png} represents the subcategory $s_{\pm}^*(D(B))$
and the cell \includegraphics[width=0.035\linewidth]{attachments/OS.png} represents the subcategory $\langle s_{\pm}^*(D(B)),s_{\pm}^*(D(B))\otimes\shU_+^{\vee}\rangle$. 

We also need a modification of the mutations between the subcategories $s_{\pm}^*D(B)\otimes E_i$'s. The following lemma indicates that we have vanishing of the $\RR Hom$ and mutations between them if we have vanishing and mutations for every fiber.  
\begin{lemma}\label{relvan}
  Let $s:\,\shX\longrightarrow B$ be a flat morphism. Suppose there is a distinguished triangle in $D(\shX)$
  $$E_1\longrightarrow E_2\longrightarrow E_3\xrightarrow{[1]}$$
  such that for any closed point $b\in B$, 
  \begin{enumerate}
      \item[(1)] the restrictions $E_1|_b, E_2|_b, E_3|_b$ are all exceptional in $\shX_b$; 
      \item[(2)] $\RR\Hom_{\shX_b}(E_1|_b, E_3|_b)=0$. 
  \end{enumerate}
  Then 
  \begin{enumerate}
      \item[(i)] $\RR\Hom(s^*D(B)\otimes E_1, s^*D(B)\otimes E_3)=0$, and
      \item[(ii)] $\LL_{s^*D(B)\otimes E_1}(s^*D(B)\otimes E_2)=s^*D(B)\otimes E_3$.
  \end{enumerate}
\end{lemma}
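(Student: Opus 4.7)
\textbf{Proof plan for Lemma \ref{relvan}.} The strategy is to translate Hom-computations on $\shX$ to computations on $B$, then use the fiberwise hypotheses together with cohomology-and-base-change. Throughout I assume (as in the application) that the exceptional objects $E_i$ are perfect complexes and that $s$ is flat and proper (equivalently, a locally trivial fibration with smooth proper fibers), so that derived base change along $s$ applies to $\RR s_* \RR\sHom(E_1, E_3)$.

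The first and main step is to establish (i). For $F, G \in D(B)$, by the tensor-Hom adjunction and the projection formula along $s$ one obtains
\begin{align*}
\RR\Hom_{\shX}(s^*F \otimes E_1,\, s^*G \otimes E_3)
&\cong \RR\Gamma\bigl(\shX,\, s^*(F^{\vee}\otimes G)\otimes \RR\sHom(E_1,E_3)\bigr)\\
&\cong \RR\Hom_B\bigl(F,\, G\otimes \RR s_*\RR\sHom(E_1,E_3)\bigr).
\end{align*}
Hence it suffices to prove that the object $\shK:=\RR s_*\RR\sHom(E_1,E_3)\in D(B)$ is zero. Since $E_1$ and $E_3$ are perfect and $s$ is flat proper, derived base change at any closed point $b\in B$ yields $\LL\iota_b^*\shK \cong \RR\Hom_{\shX_b}(E_1|_b,E_3|_b)$, which vanishes by hypothesis (2). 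A standard Nakayama-type argument then forces $\shK=0$: the bounded complex $\shK$ has derived restriction zero at every closed point of $B$, so its cohomology sheaves, being coherent with empty support, must vanish. This proves (i).

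The second step derives (ii) from (i) essentially formally. For any $G\in D(B)$, tensoring the given distinguished triangle by $s^*G$ produces a triangle
\begin{equation*}
s^*G\otimes E_1 \longrightarrow s^*G\otimes E_2 \longrightarrow s^*G\otimes E_3 \xrightarrow{[1]}
\end{equation*}
in $D(\shX)$. The first term lies in the subcategory $s^*D(B)\otimes E_1$, and by (i) the third term is right-orthogonal to $s^*D(B)\otimes E_1$. By the defining triangle of a left mutation, this identifies $s^*G\otimes E_3$ with $\LL_{s^*D(B)\otimes E_1}(s^*G\otimes E_2)$; letting $G$ range over $D(B)$ yields the statement (ii). (Admissibility of $s^*D(B)\otimes E_1$, needed for the mutation to be defined, is standard for the families at hand, following from the fact that $s^*$ admits both adjoints on the relevant subcategories.)

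The main obstacle is the vanishing $\shK=0$ in the first step. Everything else is formal, but this requires (a) verifying that $\RR\sHom$ commutes with restriction to fibers, which uses perfectness of the $E_i$; (b) applying derived base change along $s$, which uses flatness (and properness to ensure $\RR s_*$ behaves reasonably); and (c) a Nakayama-type conclusion that a bounded-below complex on $B$ vanishing at every closed fiber is the zero object. In the situations we apply this lemma to—namely the locally trivial homogeneous fibrations $OGr(4,\shW)_{\pm}\to B$ and the analogous fibrations in the $G_2^{\dagger}$ case—all three ingredients are automatic.
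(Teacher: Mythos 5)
Your proposal is correct and follows essentially the same route as the paper: both reduce (i) via adjunction/projection formula to the vanishing of $\RR s_*(E_1^{\vee}\otimes E_3)$, check this by flat base change to each closed fiber using hypothesis (2), and then obtain (ii) directly from the defining triangle of the left mutation (with admissibility supplied by \cite[Theorem 3.1]{samokhin2007some}). Your version merely makes explicit two points the paper leaves implicit, namely the Nakayama-type step deducing $\RR s_*(E_1^{\vee}\otimes E_3)=0$ from its fiberwise vanishing, and the properness/perfectness hypotheses needed for base change.
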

\begin{proof}
Condition (1) implies that $s^*D(B)\otimes E_i$ (i=1,2,3) are admissible subcategories of $D(\shX)$ by  \cite[Theorem 3.1]{samokhin2007some}. Note that $\RR\Hom(s^*D(B)\otimes E_1, s^*D(B)\otimes E_3)=0$ holds if and only if 
$\RR \Hom(s^*D(B), E_1^{\vee}\otimes E_3)
=\RR\Hom_B(D(B), s_*(E_1^{\vee}\otimes E_3))=0$ holds by adjuctions. Let $i_b: \{b\}\hookrightarrow B$ be the closed immersion. Then, the flat base-change formula implies that
$i_b^*s_*(E_1^{\vee}\otimes E_3)=\RR \Hom_{\shX_b}(E_1|_b, E_3|_b)$, which vanishes by condition (2). Then $s_*(E_1^{\vee}\otimes E_3)$ is quasi-isomorphic to 0, and (i) holds. (ii)  follows by definition of the left mutation (\cite[Definition 2.5]{kuznetsov2007homological}). 
\end{proof}

\subsection{Proof of Theorem \ref{relthm}- relative $D_4$ case}
Firstly, we can still represent SOD (\ref{relX}) using the chessboards in Figure \ref{X+} and Figure\ref{X-}. 
Note that the restriction of the canonical divisor $\omega_{\shX}|_{OGr(3, \shW)}=3h_++3h_-+\omega_B$, and thus the Serre functor of $D(\shX)$ applies identically to the chessboard as in Section \ref{pfD4}. 
Secondly, all vanishing statements in Lemma \ref{van} also hold. This can be done by checking the vanishing for each fiber $\shX_b$ and then applying Lemma \ref{relvan}. Thus, we can exchange cells as in Section \ref{pfD4}.  
Thirdly, the relative mutations hold also by applying Lemma \ref{relvan}. 
Therefore, the sequences of mutations can be applied verbatim to SOD (\ref{relX}), and the equivalence $D(\shX_+)\cong D(\shX_-)$ follows.  
\subsection{Relative $G_2^{\dagger}$ flop over base $B$} Similarly, given a $G_2$-torsor $\shP'$ over a smooth projective variety $B$, there is a relative $G_2^{\dagger}$ roof over $B$. Recall that $R$ is the non-homogeneous $G_2$-variety in the $G_2^{\dagger}$ roof (\ref{G2+}). Let $\shQ$ and $\shQ'$ denote the locally trivial fiberations $\shP'\times_{G_2} Q_5$ and $\shP'\times_{G_2} Q'_5$ respectively.  
  
\begin{equation}\label{relG2roof}
\begin{tikzcd}
    & \shP'\times_{G_2} R
    \arrow[dl]\arrow[dr]\\
    \shQ:= \shP'\times_{G_2} Q_5 \arrow[dr]&&  \shP'\times_{G_2} Q'_5:=\shQ'. \arrow[dl]\\
     & B
\end{tikzcd}
\end{equation}
Then, the relative $G_2^{\dagger}$ flop over a base $B$ can be summarized in the following. 
\begin{center}
\begin{tikzcd}
&&\shP'\times_{G_2} R \arrow[ddll] \arrow[d, hook]\arrow[ddrr] \\
&&  \arrow[dl]  \tilde{\shY} \arrow[dr]\\
 \shQ:= \shP'\times_{G_2} Q_5\arrow[drr]\arrow[r,hook]& \shY \arrow[rr,dashed]    & & \shY' & \arrow[l, hook] \arrow[dll] \shP'\times_{G_2} Q'_5:=\shQ'.\\
&& B
\end{tikzcd}
\end{center}
\subsection{Proof of Theorem \ref{relthm}- relative $G_2^{\dagger}$ case}
 The sequences of mutations in Section \ref{G2} can be applied verbatim to SOD of $D(\tilde{\shY})$ by Lemma \ref{relvan} and the equivalence $D(\shY)\cong D(\shY')$ follows.
\section{K3 Surfaces Fiberations from $D_4$ and $G_2^{\dagger}$ roof}\label{K312}
\subsection{Cayley trick}
Let $F$ be a vector bundle of rank $r$ on a smooth projective variety $A$ and $Z$ be the zero locus of a regular section $s\in H^0(A, F)$. Then $Z$ corresponds to a regular section $\xi_s$ of $\PP(F^{\vee})$ associated with the Serre line bundle $\shO_{\PP(F^{\vee})}(1)$ (the dual of the tautological line bundle) on it. Denote $H$ the zero locus of $\xi_s$, and the geometry of these varieties can be summarized by
\begin{equation}
\begin{tikzcd}
\PP(F^{\vee}|_Z) \arrow[r, hook, "\alpha"] \arrow[d,"\pi"] &H \arrow[r,hook,"i"] \arrow[d,"q"]& \PP(F^{\vee})\arrow[dl,"p"]\\
 Z \arrow[r,hook] &A. &
\end{tikzcd}
\end{equation}
Note that $q$ is a generic $\PP^{r-2}$ fibration on $B$ whose fiber over each closed point of $Z$ is $\PP^{r-1}$ and fiber over the one outside $Z$ is $\PP^{r-2}$. We may call $Z$ the \emph{jumping loci} of $q$. The relationship between the derived categories of $H$ and $Z$ is revealed in \cite{orlov2004triangulated},
\begin{align}\label{OrlovII}
    D(H)=\langle \alpha_*\pi^*D(Z)\otimes\shO_H(-1), i^*p^*D(B), i^*(p^*D(A)\otimes\shO_E(1)), \cdots, i^*(p^*D(A)\otimes \shO_E(r-2))\rangle. 
\end{align}
\subsection{Pairs of K3 fibrations}
\subsubsection{Pair of K3 surfaces from $D_4$ roof} For simplicity, we consider the $D_4$ roof (\ref{D4roof}) over a single point first. Restrict the $D_4$ roof diagram (\ref{D4roof}) to the zero locus $M_{D_4}$ of a generic hyperplane section $\xi_{D_4}$ of $\shO(h_++h_-)$ on $OG(3, V_8)$: 
\begin{equation}
\begin{tikzcd}
&M_{D_4} \arrow[ddl,swap,"\tau_+"] \arrow[d, hook,"i"]\arrow[ddr,"\tau_-"] \\
&  \arrow[dl,"p_+"]  OGr(3, V_8) \arrow[dr,swap,"p_-"]\\
  M_+ \subset\spinor_+    & & \spinor_- \supset M_- .
\end{tikzcd}
\end{equation}
By Cayley trick, the jumping loci of $\tau_{\pm}$ is the zero locus $M_{\pm}$ of $p_{\pm*}(\xi_{D_4})\in H^0(\spinor_{\pm}, \shU_{\pm}(2h_{\pm}))$, which is a K3 surface of degree 12.
Orlov's formula (\ref{OrlovII}) implies that there are two SODs on $D(M_{D_4})$:
\begin{align}
D(M_{D_4})&=\langle D(M_+), i_*p_+^*D(\spinor_+), i_*p_+^*D(\spinor_+)(h_-), i_*p_+^*D(\spinor_+)(2h_-)\rangle\label{sodM+}\\
&=\langle D(M_-), i_*p_-^*D(\spinor_-), i_*p_-^*D(\spinor_-)(h_+),i_*p_-^*D(\spinor_-)(2h_+)\rangle.\label{sodM-}
\end{align}
which look very similar to SOD (\ref{sod+}) and (\ref{sod-}). The adjunction triangle for $i$ is 
$$ -\otimes \shO(-h_+-h_-)\rightarrow id\rightarrow i_*i^*\xrightarrow{[1]},$$
which induces a distinguished triangle on $D(E)=D(OGr(3, V_8))$
\begin{align} \label{dis2}
\RR\Hom_E(A, B(1,1))\rightarrow \RR\Hom_E(A, B)\rightarrow\RR\Hom_E(A, i_*i^*B)\xrightarrow{[1]}
\end{align}
for any $A, B\in D(E)$. 
By comparing (\ref{dis}) and (\ref{dis2}), similar statements in Lemma \ref{van} hold for $M_{D_4}$ by exactly the same calculations. Therefore the mutation steps in Section \ref{pfD4} can be applied to SOD (\ref{sodM+}) and (\ref{sodM-}) directly. Hence, we obtain a derived equivalence:
$$D(M_+)\cong D(M_-).$$
\subsubsection{Pair of K3 surfaces from $G_2^{\dagger}$ roof}
 Similarly, consider the zero locus $M_{G_2}$ of a generic section $\xi_{G_2^{\dagger}}$ of $\shO(h+h')$ on $R$ in $G_2^{\dagger}$ roof diagram (\ref{G2+}). Then the zero locus $N\subset Q$ of $p_*(\xi_{G_2^{\dagger}})\in H^0(Q, \shG^{\vee}(h))$  is a K3 surface of degree 12, and so is $N'\subset Q'$. 
\begin{equation}
\begin{tikzcd}
&M_{G_2} \arrow[ddl] \arrow[d, hook]\arrow[ddr] \\
&  \arrow[dl]  R \arrow[dr]\\
  N \subset Q    & & Q' \supset N' .
\end{tikzcd}
\end{equation}
Then proof for the derived equivalence of $G_2^{\dagger}$ flop will also give a derived equivalence:
$$D(N)\cong D(N').$$ 
\begin{remark}
Sequence (\ref{Ott+-}) and triality of $D_4$ imply that the K3 surface $N$ ( $N'$ resp.) is a degeneration of the K3 surface $M_+$($M_-$ resp.).   
\end{remark}

\subsubsection{Proof of Theorem \ref{K3}}
The relative $D_4$ roof (\ref{relD4roof}) will give two K3 fibrations $\shM_+. \shM_-$ over the base $B$ by considering the zero locus $\shM_{D_4}$ of a generic $(1,1)$ section of the roof. Just as for the relative $D_4$ flop over $B$, the Cayley trick gives two SODs for $D(\shM_{D_4})$ which are just relative versions of SOD (\ref{sodM+}) and (\ref{sodM-}). We can apply Lemma \ref{relvan} to exchange and mutate the SOD components just as in the relative $D_4$ flop case. Thus, the equivalence $D(\shM_+)\cong D(\shM_-)$ follows from the same mutation functors as in the $D_4$ flop case. The derived equivalence of the K3 fibrations $\shN, \shN'$ follows similarly.  
\bibliographystyle{alpha}
\bibliography{referencesxy}

\end{document}